\newcommand{\qb}{\bm{q}}
\newcommand{\cb}{\bm{c}}
\newcommand{\zb}{\bm{z}}
\newcommand{\xb}{\bm{x}}
\newcommand{\Ab}{\bm{A}}
\newcommand{\Bb}{\bm{B}}
\newcommand{\db}{\bm{d}}
\newcommand{\mub}{\bm{\mu}}
\newcommand{\betab}{\bm{\beta}}
\newcommand{\tilxi}{\bm{\tilde{\xi}}}
\newcommand{\tilxiN}{\tilde{\xi}_{\text{\tiny N}}}
\newcommand{\tilxiE}{\tilde{\xi}_{\text{\tiny E}}}
\newcommand{\Eb}{\mathbb{E}}
\newcommand{\Pb}{\mathbb{P}}
\newcommand{\zzg}{z^{\text{\tiny g}}} 
\newcommand{\zza}{z^{\text{\tiny a}}} 
\newcommand{\ffp}{f^{\text{\tiny p}}} 
\newcommand{\ffq}{f^{\text{\tiny q}}} 
\newcommand{\UBgp}{\overline{g}^{\text{\tiny p}}} 
\newcommand{\UBgq}{\overline{g}^{\text{\tiny q}}} 
\newcommand{\LBgp}{\underline{g}^{\text{\tiny p}}} 
\newcommand{\LBgq}{\underline{g}^{\text{\tiny q}}} 
\newcommand{\ddp}{d^{\text{\tiny p}}} 
\newcommand{\ddq}{d^{\text{\tiny q}}} 
\newcommand{\lpp}{l^{\text{\tiny p+}}} 
\newcommand{\lpm}{l^{\text{\tiny p--}}} 
\newcommand{\lqp}{l^{\text{\tiny q+}}} 
\newcommand{\lqm}{l^{\text{\tiny q--}}} 
\newtheorem{theorem}{Theorem}[section]
\newtheorem{remark}[theorem]{Remark}
\newtheorem{proposition}[theorem]{Proposition}
\let\old@ps@headings\ps@headings
\let\old@ps@IEEEtitlepagestyle\ps@IEEEtitlepagestyle
\def\psccfooter#1{%
    \def\ps@headings{%
        \old@ps@headings%
        \def\@oddfoot{\strut\hfill#1\hfill\strut}%
        \def\@evenfoot{\strut\hfill#1\hfill\strut}%
    }%
    \def\ps@IEEEtitlepagestyle{%
        \old@ps@IEEEtitlepagestyle%
        \def\@oddfoot{\strut\hfill#1\hfill\strut}%
        \def\@evenfoot{\strut\hfill#1\hfill\strut}%
    }%
    \ps@headings%
}
\begin{document}
\title{\Huge Algorithms for Mitigating the Effect of Uncertain Geomagnetic Disturbances in Electric Grids}

\author{
\IEEEauthorblockN{Minseok Ryu$^*$, Harsha Nagarajan$^\dag$, Russell Bent$^\dag$}
\IEEEauthorblockA{$^*$Department of Industrial \& Operations Engineering, University of Michigan, Ann Arbor, \\
$^\dag$Theoretical Division (T-5), Los Alamos National Laboratory, NM, U.S.A. (Contact: harsha@lanl.gov)}
}

\maketitle



\begin{abstract}
Geomagnetic disturbances (GMDs), a result of space weather, pose a severe risk to electric grids. When GMDs occur, they can cause geomagnetically-induced currents (GICs), which saturate transformers, induce hot-spot heating, and increase reactive power losses in the transmission grid. Furthermore, uncertainty in the magnitude and orientation of the geo-electric field, and insufficient historical data make the problem of mitigating the effects of uncertain GMDs challenging. In this paper, we propose a novel distributionally robust optimization (DRO) approach that models uncertain GMDs and mitigates the effects of GICs on electric grids. This is achieved via a set of mitigation actions (e.g., line switching, locating blocking devices, generator re-dispatch and load shedding), prior to the GMD event, such that the worst-case expectation of the system cost is minimized. To this end, we develop a column-and-constraint generation algorithm that solves a sequence of mixed-integer second-order conic programs to handle the underlying convex support set of the uncertain GMDs. Also, we present a monolithic exact reformulation of our DRO model when the underlying support set can be approximated by a polytope with \textit{three} extreme points. Numerical experiments on `epri-21' system show the efficacy of the proposed algorithms and the exact reformulation of our DRO model. 
\end{abstract}

\begin{IEEEkeywords}
Algorithms, Distributionally robust optimization, Geomagnetic disturbances, Line switching.
\end{IEEEkeywords}


\section{Introduction}
\label{sec:intro}
Geomagnetic disturbances (GMDs) are caused by solar storms. During these storms charged particles escape from the sun, travel to the earth, create a geomagnetically-induced current (GIC), and impact electric transmission grids. Detrimental impacts include current distortions (harmonics), saturation of transformers, induced hot-spot heating, and increased reactive power losses \cite{gao_gmd_2018, boteler1998effects, gannon2019geomagnetically} (see Figure \ref{fig:GIC_illustration}). In 1989, the Hydro-Quebec power system was shut down for 9 hours due to a rare, but high-impact GMD, which led to a net loss of \$13.2 million \cite{bolduc2002gic}. 

\begin{figure}[h]
    \centering
    \includegraphics[width=\columnwidth]{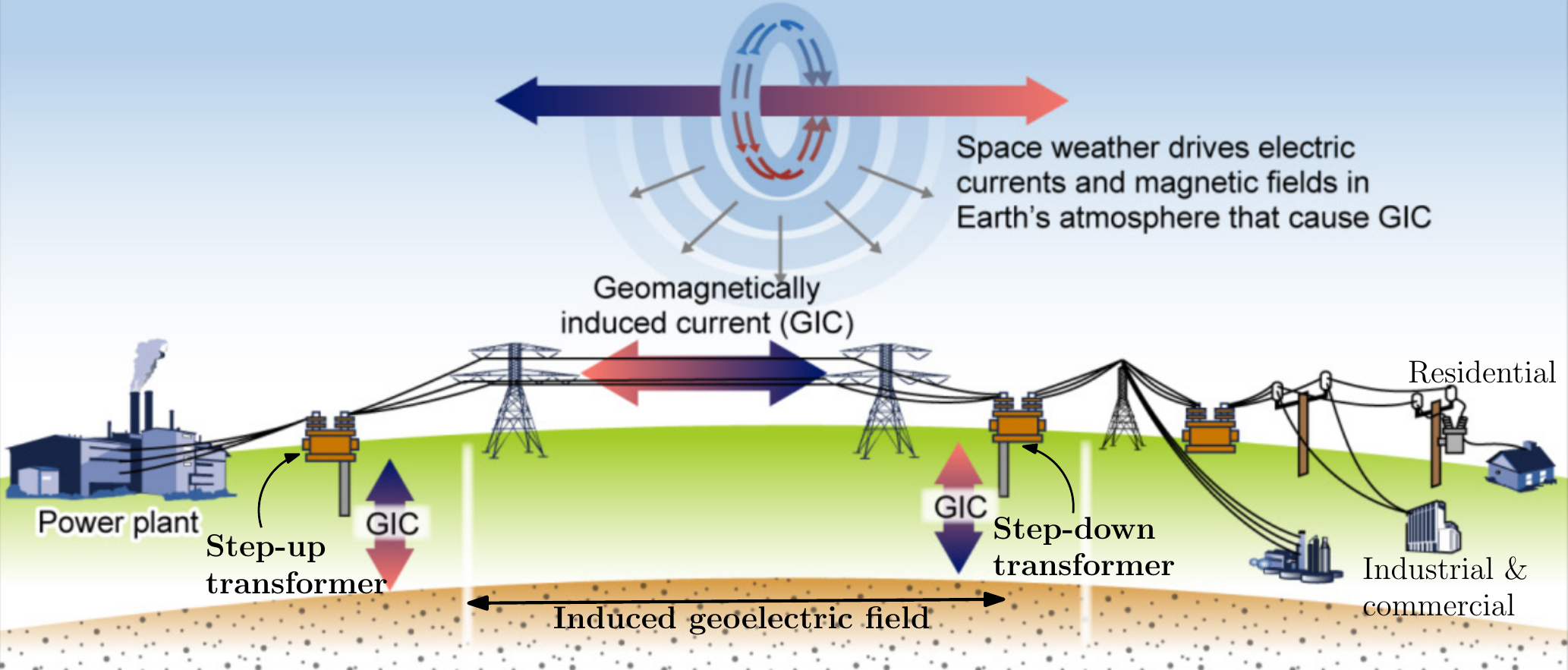}
    \caption{Effect of GMDs on the electric grid. (Source: \cite{gao_gmd_2018})}
    \label{fig:GIC_illustration}
\end{figure}

To mitigate the harmful effects of GMDs, one can install Direct Current (DC) blocking devices at regional substations to prevent the GICs, which is quasi-DC, from entering the power network through transformer neutrals \cite{bolduc2005development}.
This has led to work like \cite{zhu2014blocking} which posed the GIC blocking device placement problem that minimizes the costs of selecting appropriate locations to install these devices.
Recently, numerous researchers have suggested that the risk of GICs could be reduced by the use of existing controls such as generator re-dispatch, line switching and load shedding. 
In \cite{lu2017optimal}, for the first time, the authors proposed an optimal transmission line switching (OTS) model under GMDs based on Alternating Current (AC) power flow equations and a set of constraints that captures GIC effects on various types of transformers. Utilizing state-of-the-art convex relaxations, the model was formulated as a mixed-integer quadratic convex program, which could be solved using commercial optimization solvers, albeit on small-scale instances. Recently, \cite{kazerooni2018transformer} presented heuristic-based algorithms to mitigate the effect of GICs on transformers by using line switching strategies on large-scale grids. 
Given that GMD events are hard to predict in advance and that the probability distribution of the uncertain magnitude and orientation of GMDs is not known precisely due to the insufficient historical data, \cite{lu2019distributionally} proposed a two-stage distributionally robust optimization (DRO) model with a mean-support ambiguity set and applied the standard column-and-constraint generation (CCG) algorithm \cite{zeng2013solving} to solve on a small-scale instance, with prohibitively slow run times. 

In this paper, we formulate a modified and an improved version of the two-stage DRO formulation presented in \cite{lu2019distributionally}. 
While \cite{lu2019distributionally} is focused on making the mitigation actions (line switching and generator dispatch) that hedges against the worst-case expected load shedding costs calculated based on the power flow equations at the second-stage, this paper focuses on making not only the mitigation actions but also their corresponding power flow operations at the first stage so as to minimize the worst-case expected damage costs due to GICs. This proposed decomposition also better represents the operation of electric grids from a practical perspective. Specifically, the first-stage problem models the AC Optimal Transmission Switching (AC-OTS) which determines active transmission lines and the set-points for generators which minimize the worst-case expected costs occurred by GMDs, i.e., taking expectation over the worst-case probability distribution among all the distributions in the ambiguity set. Given the solutions of the first-stage, the second-stage problem consists of linear constraints that capture the GIC effects. We assume the mean-support ambiguity set is provided and is uniform throughout the grid. The support set of the uncertain parameters is convex and can be approximated by a polytope with $N$ extreme points. With these assumptions, the contributions of this paper are: 
(a) We first reformulate the two-stage DRO model as a min-max-min problem that can be solved by a CCG algorithm. \cite{lu2019distributionally} solves sub-problems which contain bilinear terms in the objective function, leading to weaker relaxations. To circumvent this issue, we solve a set of linear programs, each of which corresponds to an extreme point of the support set and enhances computation tractability. (b)  For the special case of the support set with \textit{three} extreme points ($N=3$), we prove that the two-stage DRO model can be equivalently reformulated as a two-stage stochastic program with three scenarios. We further propose to construct a monolithic reformulation which can be solved efficiently using commercial solvers, and (c) We present a detailed numerical analysis on the `epri-21' system, which is designed specifically for the GMD studies. 

\section{Mathematical Formulation}
\label{sec:math_form}
This section describes mathematical models that find a set of actions which mitigates the negative impacts of uncertain GMDs. Note that the formulation presented in this paper focuses on the quasi-static case (single time period) and leaves time-extended modeling for the future work. 
\begin{table}[h!]
    \footnotesize
    \centering
    \caption{Nomenclature}
    \label{tab-definition}    
    \begin{tabular}{l||l}
		\hline
		\multicolumn{2}{c}{\textbf{Sets and parameters}} \\ \hline
		$\mathcal{G},\ \mathcal{N}, \ \mathcal{E}$ & a set of generators, buses, and lines in AC network \\
		$\mathcal{E}^{\tau} \subseteq \mathcal{E} $ & a set of transformers \\
		$\mathcal{E}_i \subseteq \mathcal{E}$ & a set of lines connected to $i \in \mathcal{N}$ \\
		$c^{\text{\tiny F0}}_k$ & fixed cost when turning on $k \in \mathcal{G}$\\
		$c^{\text{\tiny F1}}_k, c^{\text{\tiny F2}}_k$ & fuel cost coefficients of power generation of $k \in \mathcal{G}$ \\
		$\LBgp_k, \UBgp_k$ & bounds on the real power generation of $k \in \mathcal{G}$ \\
		$\LBgq_k, \UBgq_k$ & bounds on the reactive power generation of $k \in \mathcal{G}$ \\
		$\kappa^{\text{\tiny l}}$ & unit penalty cost for power unbalance at $i \in \mathcal{N}$ \\
		$\ddp_i, \ddq_i$ & real and reactive power demand at $i \in \mathcal{N}$ \\
		$\underline{v}_i, \overline{v}_i$ & voltage limits at $i \in \mathcal{N}$ \\
		$g^{\text{\tiny s}}_i, b^{\text{\tiny s}}_i$ & shunt conductance and susceptance at $i \in \mathcal{N}$ \\
		$g_e, b_e$ & conductance, susceptance of $e \in \mathcal{E}$ \\
        $b_e^{\text{\tiny c}}$ & line charging susceptance of  $e \in \mathcal{E}$ \\		
		$\overline{s}_e$ & apparent power limit of line $e \in \mathcal{E}$\\
		$\alpha_{ij}$ & tap ratio of $e_{ij} \in \mathcal{E}$\\
		$k_e$ & loss factor of transformer $e \in \mathcal{E}^{\tau}$\\
		$\overline{I}^{\text{\tiny eff}}_e$ & upper limit of the effective GICs on $e \in \mathcal{E}^{\tau}$\\
		\hline 
		$\mathcal{N}^{\text{\tiny d}}, \mathcal{E}^{\text{\tiny d}}$ & a set of nodes and arcs in DC network \\
		$\mathcal{E}^{\text{\tiny d}-}_m, \mathcal{E}^{\text{\tiny d}+}_m$ & a set of incoming and outgoing arcs connected to $m \in \mathcal{N}^{\text{\tiny d}}$ \\
		$\gamma_{\ell}$ & conductance of $\ell \in \mathcal{E}^{\text{\tiny d}}$ \\
		$a_m$ & inverse of ground resistance at $m \in \mathcal{N}^{\text{\tiny d}}$  \\
		$\overline{v}^{\text{\tiny d}}$ & bound on the GIC-induced voltage magnitude \\
		$\tilde{\xi}_{\ell} $ & (random) GIC-induced voltage sources on $\ell \in \mathcal{E}^{\text{\tiny d}}$ \\ 
		\hline
		\multicolumn{2}{c}{\textbf{Variables}} \\ \hline
		$\zza_e \in \mathbb{B}$ & $\zza_e=1$ if $e \in \mathcal{E}$ is turned on, and $\zza_e=0$ otherwise  \\
		$\zzg_k \in \mathbb{B}$ & $\zzg_k=1$ if $k \in \mathcal{G}$ is turned on, and $\zzg_k=0$ otherwise  \\		
		$ \lpp_i, \lpm_i$ & real power shedding at $i \in \mathcal{N}$ \\
		$ \lqp_i, \lqm_i$ & reactive power shedding at $i \in \mathcal{N}$ \\
		$v_i$ & voltage magnitude at $i \in \mathcal{N}$ \\
		$\theta_i$ & phase angle at $i \in \mathcal{N}$ \\
		$\ffp_k, \ \ffq_k$ & real and reactive power generated by $k \in \mathcal{G}$\\
		$p_{ei}, \ p_{ej}$ & real power flow on $e_{ij} \in \mathcal{E}$ \textit{from node} $i$ and \textit{to node} $j$ \\
		$q_{ei}, \ q_{ej}$ & reactive power flow on $e_{ij} \in \mathcal{E}$ \textit{from node} $i$ and \textit{to node} $j$ \\		
		$w_i$ & $w_i = v_i^2, \ \forall i \in \mathcal{N}$ \\
		$w^{\text{\tiny c}}_e$ & $w^{\text{\tiny c}}_e=v_i v_j \cos(\theta_i-\theta_j), \ \forall e_{ij} \in \mathcal{E}$ \\
		$w^{\text{\tiny s}}_e$ & $w^{\text{\tiny s}}_e=v_i v_j \sin(\theta_i-\theta_j), \ \forall e_{ij} \in \mathcal{E}$ \\
		$d^{\text{\tiny qloss}}_i$ & allowable reactive power loss due to GICs at $i \in \mathcal{N}$ \\		
		\hline 
		$I^{\text{\tiny d}}_{\ell}$ & GICs that flow on $\ell \in \mathcal{E}^{\text{\tiny d}}$ \\
        $I^{\text{\tiny eff}}_e$ & effective GICs on $e \in \mathcal{E}^{\tau}$ \\				
        $v^{\text{\tiny d}}_m$ & GIC-induced voltage magnitude at $m \in \mathcal{N}^{\text{\tiny d}}$  \\		
		\hline
	\end{tabular}
\end{table}
\subsection{AC and DC power network representation}
\label{subsec:ACDC}
\begin{figure}
    \centering
    \includegraphics[scale=0.4]{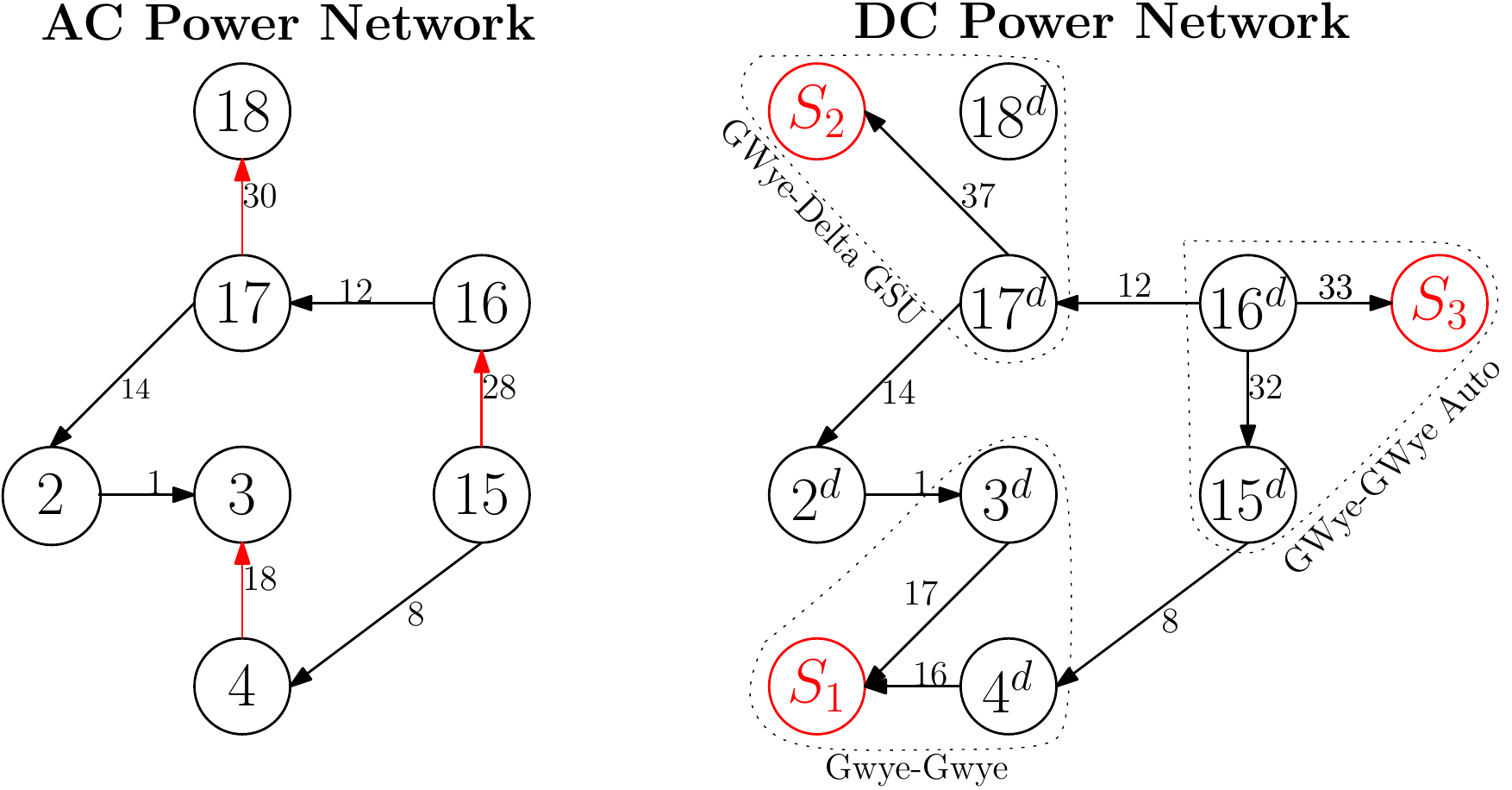}
    \caption{An example of AC to DC network mapping}
    \label{fig:ACDC}
    \vspace{-0.5cm}
\end{figure}

The AC power network is represented by a graph $(\mathcal{N}, \mathcal{E})$, where $\mathcal{N}$ is a set of nodes and $\mathcal{E}$ is a set of arcs. 
This network is the standard representation used for modeling AC power flow physics in power system applications.
The set $\mathcal{E}$ is composed of $\mathcal{E}^{\tau}$, a set of transformers, and $\mathcal{E} \setminus \mathcal{E}^{\tau}$, a set of transmission lines. The set $\mathcal{N}$ is composed of buses that are adjacent to transmission lines and/or transformers.
For calculating GICs (details in subsequent sections), which is a quasi-DC flow, we construct a DC power network $(\mathcal{N}^{\text{\tiny d}}, \mathcal{E}^{\text{\tiny d}})$. $\mathcal{N}^{\text{\tiny d}}$ includes buses in $\mathcal{N}$ and additional nodes that model the neutral grounding points of transformers.
The set $\mathcal{E}^{\text{\tiny d}}$ includes transmission lines in $\mathcal{E}$ and additional lines between the end points of transformers and their neutrals. The transformer configurations depend on the type of the transformer. In this paper, we consider 3 types of transformers, (i) Gwye-Gwye, (ii) GWye-GWye Auto, and (iii) GWye-Delta GSU. Detailed descriptions of these transformer types are found in \cite{lu2017optimal,lu2019distributionally}. 

The mapping between the AC and DC network representations are described in Figure \ref{fig:ACDC}.
In the AC network, $\mathcal{N} = \{2,3,4,15,16,17,18\}$, $\mathcal{E}=\{1,8,12,14,18,28,30\}$, and $\mathcal{E}^{\tau}=\{18,28,30\}$ (red arrows in Figure \ref{fig:ACDC}). The DC network $(\mathcal{N}^{\text{\tiny d}}, \mathcal{E}^{\text{\tiny d}})$ is constructed by adding information on different types of transformers and their connection to neutral (red circles in Figure \ref{fig:ACDC}). 
We first add neutral nodes $\{S_1,S_2,S_3\}$, each of which is connected to a transformer. The set $\mathcal{N}$ is relabeled with $\{2^d,3^d,4^d,15^d,16^d,17^d,18^d\}$ in the DC network. Topological mappings for different types of transformers are highlighted in Figure \ref{fig:ACDC}. 

To link the two networks, we define functions $E$ and $E^{-1}$ that map $\ell \in \mathcal{E}^{\text{\tiny d}}$ to an edge $e \in \mathcal{E}$ and vice versa, i.e., if $E_{\ell}=e$, then $E^{-1}_e = \{ \ell \in \mathcal{E}^{\text{\tiny d}} \ : \ E_{\ell} = e  \}$. 
For example, line $17$ in the DC network maps to the transformer line $18$ in the AC network, thus $E_{17}=18$. Transformer line $18$ maps to lines $16$ and $17$ in DC network, thus $E^{-1}_{18} = \{16, 17\}$. 
\subsection{GIC calculation}
This section describes the calculation of GICs and how they affect different types of transformers. Under the assumption of an uniformly induced geo-electric field within an interconnected electric grid, GIC that flows on a line in the DC network is given by 
\begin{align*}
I^{\text{\tiny d}}_{\ell} = \gamma_{\ell} (v^{\text{\tiny d}}_m - v^{\text{\tiny d}}_n + \tilde{\xi}_{\ell}), \ \forall \ell_{mn} \in \mathcal{E}^{\text{\tiny d}}
\end{align*}
where, $\tilde{\xi}_{\ell}$ is the GIC-induced voltage source, given by: 
\begin{align*}
\tilde{\xi}_{\ell} = 
\begin{cases}
\tilxiE L^{\text{\tiny E}}_{\ell} + \tilxiN L^{\text{\tiny N}}_{\ell}, \ \forall \ell \in \mathcal{E}^{\text{\tiny d}} : E_{\ell} \in \mathcal{E} \setminus \mathcal{E}^{\tau}, \\ 
0, \ \forall \ell \in \mathcal{E}^{\text{\tiny d}} : E_{\ell} \in \mathcal{E}^{\tau}
\end{cases}
\end{align*}
and $\tilxiE$ and $\tilxiN$ are \textit{uncertain} geo-electric fields [V/km] in the eastward and northward direction, respectively, and $L^{\text{\tiny E}}_{\ell}$ and $L^{\text{\tiny N}}_{\ell}$ are the lengths [km] of transmission lines $\ell$ in the eastward and northward direction, respectively.

Based on the calculated GIC in the DC network, the effective GIC of a transformer in the AC network is given by
\begin{align*}
& I^{\text{\tiny eff}}_e = | \Theta(I^{\text{\tiny d}}_{\ell}, \ \forall \ell \in E^{-1}_e )|, \ \forall e \in \mathcal{E}^{\tau}
\end{align*}
where $\Theta(I^{\text{\tiny d}}_{\ell}, \ \forall \ell \in E^{-1}_e )$ is a linear function of GIC ($I^{\text{\tiny d}}_{\ell}$). This function depends on the type of transformer as described in Table \ref{tab-effGIC}.
Note that $(N_h, N_l,N_s, N_c)$ are parameters which indicate the number of turns in the high-side/low-side/series/common winding, respectively.
\begin{table}[h!]
    \small
    \centering
    \caption{Effective GICs for each type of transformers}
    \label{tab-effGIC}    
    \begin{tabular}{l|l|l}
    \toprule	
    Type of transformer $e$ & $E^{-1}_e$ 	& $\Theta(I^{\text{\tiny d}}_{\ell}, \ \forall \ell \in E^{-1}_e )$ \\ 
    \hline
    Gwye-Gwye		& $\{h, l\}$  	& $\Theta(I^{\text{\tiny d}}_{h}, I^{\text{\tiny d}}_{l}) = \frac{ N_h I^{\text{\tiny d}}_{h} + N_l I^{\text{\tiny d}}_{l}}{N_h}$ \\
    GWye-GWye Auto	& $\{s, c\}$  	& $\Theta(I^{\text{\tiny d}}_{s}, I^{\text{\tiny d}}_{c}) = \frac{N_s I^{\text{\tiny d}}_{s} + N_c I^{\text{\tiny d}}_{c}}{N_s+N_c}$	 \\
    GWye-Delta GSU	& $\{h\}$ &  $\Theta(I^{\text{\tiny d}}_{h}) = I^{\text{\tiny d}}_{h} $ \\ 
    \bottomrule
    \end{tabular}
\end{table}

Lastly, given $\mathcal{E}^{\tau}_i$ is a set of transformers connected to node $i$, the reactive power loss \cite{zhu2014blocking} due to GICs at node $i$ in the AC network is calculated by
\begin{align}
\sum_{e \in \mathcal{E}^{\tau}_i} k_e v_i I^{\text{\tiny eff}}_e, \ \forall i \in \mathcal{N}
\end{align}

\subsection{Uncertainty set description}
\label{subsec:uncertain}
\vspace{-0.1cm}
\begin{figure}[h!]
	\centering
	\includegraphics[scale=0.35]{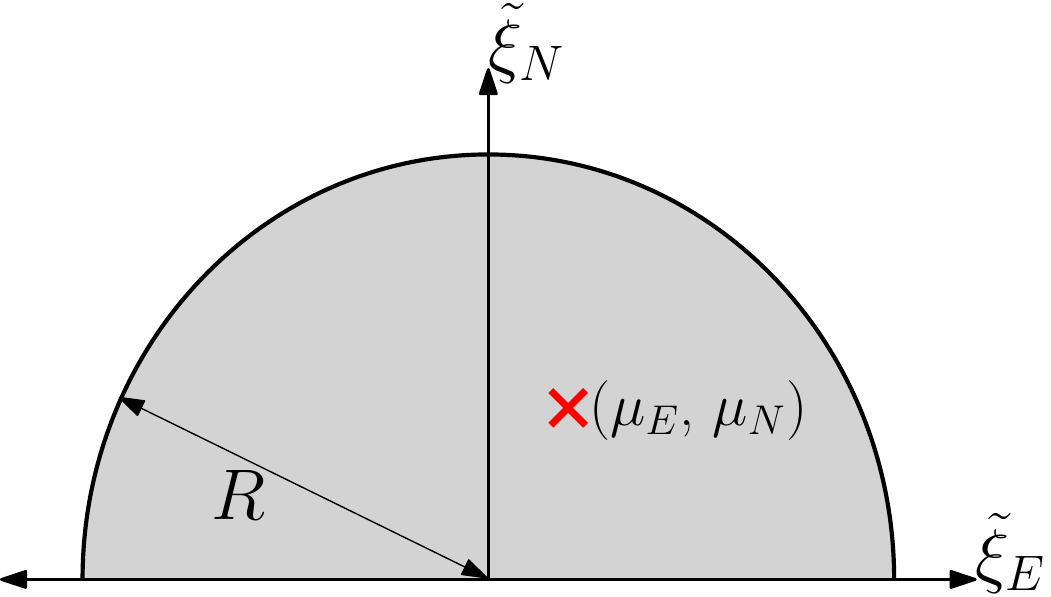}
	\caption{Support set and mean of uncertain GMDs $(\tilxiE, \tilxiN)$ }
	\label{fig:SupportSets}
\end{figure}
In this model, the geo-electric field $(\tilxiE, \tilxiN)$ is uncertain when planning decisions are made
to mitigate the risk of GMDs. We assume that the support set and mean values of $(\tilxiE, \tilxiN)$ are provided or are extracted from a set of historical data, but their joint probability distribution is unknown. Generally, there is not enough information to construct reasonable probability distributions \cite{woodroffe2016latitudinal}.
In this setting, we construct the following mean-support ambiguity set $\mathcal{D}$:
\begin{align*}
\mathcal{D} := \{ \mathbb{P} \ : \  \mathbb{E}_{\mathbb{P}_{\text{\tiny E}}} [ \tilxiE ] = \mu_{\text{\tiny E}}, \ \  \mathbb{E}_{\mathbb{P}_{\text{\tiny N}}} [ \tilxiN ] = \mu_{\text{\tiny N}}, \ \ \mathbb{P}\{ \tilxi \in \Xi \}=1  \} 
\end{align*}
where $(\mathbb{P}_{\text{\tiny E}}, \mathbb{P}_{\text{\tiny N}})$ and $(\mu_{\text{\tiny E}}, \mu_{\text{\tiny N}})$ are the marginal distributions and mean values of $(\tilxiE, \tilxiN)$, respectively. The support set $\Xi$ of $\tilxi=(\tilxiE, \tilxiN)$ is defined as
\begin{align*}
\Xi := \left\{ \tilxi=(\tilxiE, \tilxiN) \in \mathbb{R}^2 : -R \leq \tilxiE \leq R, \ \ 0 \leq \tilxiN \leq R, \right. \\ \left. (\tilxiE)^2 + (\tilxiN)^2 \leq R^2 \right\} 
\end{align*}
where $R$ is a radius of half-circle support set as shown in Figure \ref{fig:SupportSets}. 
Practically speaking, the support set is bounded by an estimate of the worst-case magnitude of the storm. Since the square of the magnitude is equal to the sum of the squares of the northward and eastward field strength, a magnitude bound yields a half circle support set for $(\tilxiE, \tilxiN)$ \cite{woodroffe2016latitudinal}.

\subsection{Two-stage DRO formulation}
\label{subsec:2stage}
In this section we describe a two-stage DRO formulation which determines a set of mitigation actions which minimizes the cost of generation and a penalty for shedding loads and is robust to the worst-case probability distribution in the ambiguity set $\cal{D}$ as defined in section \ref{subsec:uncertain}. To preserve computational tractability, we use the second-order conic relaxations of the rectangular form of the nonlinear, nonconvex AC power flow constraints and formulate the first-stage problem as an MISOCP: 

\begin{subequations} 
	\small{
	\begin{align}
	\min \ &  \sum_{k \in \mathcal{G}} ( c^{\text{\tiny F0}}_k \zzg_k + c^{\text{\tiny F1}}_k \ffp_k +  c^{\text{\tiny F2}}_k (\ffp_k)^2 ) + \sum_{i \in \mathcal{N}} \kappa^{\text{\tiny l}} ( \lpp_i + \lpm_i + \lqp_i + \lqm_i  ) \nonumber \\ 
	 &\hspace{102pt} + \sup_{\Pb \in \mathcal{D}} \Eb_{\Pb}[ \mathcal{Q}(\boldsymbol{{\zza}, v, {d^{\text{\tiny qloss}}}, {\tilde{\xi}}}] \label{eq:stg1_obj}  \\ 
	\mbox{s.t.} \ 
	& \sum_{e \in \mathcal{E}_i } p_{ei} = \sum_{k \in \mathcal{G}_i } \ffp_k - \ddp_i + \lpp_i - \lpm_i - g^{\text{\tiny s}}_i w_i, \ \forall i \in \mathcal{N}, \label{eq:stg1_1} \\
	& \sum_{e \in \mathcal{E}_i } q_{ei} = \sum_{k \in \mathcal{G}_i } \ffq_k - \ddq_i  + \lqp_i - \lqm_i + b^{\text{\tiny s}}_i w_i - d^{\text{\tiny qloss}}_i, \ \forall i \in \mathcal{N}, \label{eq:stg1_2} \\
	& p_{ei}^2 + q_{ei}^2 \leq \zza_e (\overline{s}_e^2) , \ \ p_{ej}^2 + q_{ej}^2 \leq \zza_e (\overline{s}_e^2), \ \forall e_{ij} \in \mathcal{E}, \label{eq:stg1_3} \\	
	& p_{ei} =   \frac{1}{\alpha_{ij}^2} g_e w^{\text{\tiny z}}_{ei} - \frac{1}{\alpha_{ij}} (g_e w^{\text{\tiny cz}}_e + b_e w^{\text{\tiny sz}}_e), \ \forall e_{ij} \in \mathcal{E}, \label{eq:stg1_4}  \\
	& p_{ej} =   g_e w^{\text{\tiny z}}_{ej} - \frac{1}{\alpha_{ij}}  (g_e w^{\text{\tiny cz}}_e - b_e w^{\text{\tiny sz}}_e  ), \ \forall e_{ij} \in \mathcal{E}, \label{eq:stg1_5} \\
	& q_{ei} =   -\frac{1}{\alpha_{ij}^2} (b_e + \frac{b_e^{\text{\tiny c}}}{2}) w^{\text{\tiny z}}_{ei} + \frac{1}{\alpha_{ij}} (b_e w^{\text{\tiny cz}}_e - g_e w^{\text{\tiny sz}}_e  ), \ \forall e_{ij} \in \mathcal{E},  \label{eq:stg1_6} \\
	& q_{ej} =   -(b_e + \frac{b_e^{\text{\tiny c}}}{2}) w^{\text{\tiny z}}_{ej} + \frac{1}{\alpha_{ij}} (b_e w^{\text{\tiny cz}}_e + g_e w^{\text{\tiny sz}}_e  ),  \forall e_{ij} \in \mathcal{E}, \label{eq:stg1_7} \\
	& w^{\text{\tiny z}}_{ei} \in \langle \zza_e, w_i\rangle^{\text{\tiny MC}}, \ \ w^{\text{\tiny z}}_{ej} \in \langle \zza_e, w_j\rangle^{\text{\tiny MC}}, \ \forall e_{ij} \in \mathcal{E}, \label{eq:stg1_8}\\ 		
	& \underline{w}^{\text{\tiny c}}_e \zza_e \leq w^{\text{\tiny cz}}_{e} \leq \overline{w}^{\text{\tiny c}}_e \zza_e, \ \ \underline{w}^{\text{\tiny s}}_e \zza_e \leq w^{\text{\tiny sz}}_{e} \leq \overline{w}^{\text{\tiny s}}_e \zza_e,  \ \forall e \in \mathcal{E}, \label{eq:stg1_9}\\			
	& (w^{\text{\tiny cz}}_{e})^2 + (w^{\text{\tiny sz}}_{e})^2 \leq w^{\text{\tiny z}}_{ei} w^{\text{\tiny z}}_{ej}, \ \forall e_{ij} \in \mathcal{E}, \label{eq:stg1_10} \\	
	& \tan (\underline{\theta}_{ij}) w^{\text{\tiny cz}}_{e} \leq w^{\text{\tiny sz}}_{e} \leq \tan (\overline{\theta}_{ij}) w^{\text{\tiny cz}}_{e}, \ \forall e_{ij} \in \mathcal{E}, \label{eq:stg1_11}\\	
	& (v_i)^2 \leq w_i \leq (\overline{v}_i + \underline{v}_i) v_i - \overline{v}_i \underline{v}_i, \ \forall i \in \mathcal{N}, \label{eq:stg1_12}\\		
	& \sum_{e \in \mathcal{E}_k} \zza_e \geq \zzg_k, \  \forall k \in \mathcal{G}, \label{eq:stg1_13} \\	
	& \underline{v}_i \leq v_i \leq \overline{v}_i, \ \ \lpp_i, \lpm_i, \lqp_i, \lqm_i \geq 0, \ \forall i \in \mathcal{N},  \label{eq:stg1_14} \\
	& \LBgp_k \zzg_k \leq \ffp_k \leq \UBgp_k \zzg_k, \ \ \LBgq_k \zzg_k \leq \ffq_k \leq \UBgq_k \zzg_k, \ \forall k \in \mathcal{G}, \label{eq:stg1_15} \\
	& \zza_e \in \{0,1\}, \ \forall e \in \mathcal{E}, \ \ \zzg_k \in \{0,1\}, \ \forall k \in \mathcal{G}. \label{eq:stg1_16}
	\end{align}}
	\label{eq:stg1}
\end{subequations}
where, given the values of $\boldsymbol{{\zza}, v, {d^{\text{\tiny qloss}}}}$ from the first-stage problem \eqref{eq:stg1} and the realization of $\boldsymbol{\tilde{\xi}}$,  $\mathcal{Q}(\boldsymbol{{\zza}, v, {d^{\text{\tiny qloss}}}, {\tilde{\xi}}})$ is the optimal value of the following second-stage problem which evaluates GICs in the DC network:

\begin{subequations}
\small{
\begin{align}
	\min \ & \sum_{i \in \mathcal{N}} \kappa^{\text{\tiny s}} s_i  \\
	\mbox{s.t.} \ 
	& \frac{I^{\text{\tiny d}}_{\ell}}{\gamma_{\ell}} \leq (v^{\text{\tiny d}}_m - v^{\text{\tiny d}}_n + \tilde{\xi}_{\ell}) + M^{-}_{\ell} (1-\zza_{E_{\ell}}), \ \forall \ell_{mn} \in \mathcal{E}^{\text{\tiny d}}, \label{eq:stg2_1} \\
	& \frac{I^{\text{\tiny d}}_{\ell}}{\gamma_{\ell}} \geq (v^{\text{\tiny d}}_m - v^{\text{\tiny d}}_n + \tilde{\xi}_{\ell}) - M^{+}_{\ell} (1-\zza_{E_{\ell}}), \ \forall \ell_{mn} \in \mathcal{E}^{\text{\tiny d}}, \label{eq:stg2_2} \\
	& - \zza_{E_{\ell}} M^{-}_{\ell} \leq \frac{I^{\text{\tiny d}}_{\ell}}{\gamma_{\ell}} \leq  \zza_{E_{\ell}} M^{+}_{\ell}, \ \forall \ell_{mn} \in \mathcal{E}^{\text{\tiny d}}, \label{eq:stg2_3}\\
	& \sum_{\ell \in \mathcal{E}^{d-}_m} I^{\text{\tiny d}}_{\ell} - \sum_{\ell \in \mathcal{E}^{d+}_m} I^{\text{\tiny d}}_{\ell} = a_m v^{\text{\tiny d}}_m, \ \forall m \in \mathcal{N}^{\text{\tiny d}}, \label{eq:stg2_4}\\
	& I^{\text{\tiny eff}}_{e} \geq \Theta(I^{\text{\tiny d}}_{\ell}, \ \forall \ell \in E^{-1}_e ), \ \ I^{\text{\tiny eff}}_{e} \geq -\Theta(I^{\text{\tiny d}}_{\ell}, \ \forall \ell \in E^{-1}_e ), \label{eq:stg2_5} \\ 
	& 0 \leq I^{\text{\tiny eff}}_e \leq \overline{I}^{\text{\tiny eff}}_e, \ \forall e \in \mathcal{E}^{\tau}, \quad s_i \geq 0, \ \forall i \in \mathcal{N} \label{eq:stg2_6}\\	
	& u^{\text{\tiny d}}_{ei} \in \langle v_i, I^{\text{\tiny eff}}_{e} \rangle^{\text{\tiny MC}}, \ d^{\text{\tiny qloss}}_i \geq \sum_{e \in \mathcal{E}^{\tau}_i} k_e u^{\text{\tiny d}}_{ei} - s_i,  \ \forall i \in \mathcal{N}. \label{eq:stg2_7}
\end{align}}    
\end{subequations}
Constraints \eqref{eq:stg1_1}--\eqref{eq:stg1_15} model the relaxed AC power flow equations. These constraints include mitigation actions such as transmission line switching and switching of generators. Note that generator switching in this formulation \textit{does not} imply that we model economic unit commitment. Constraints \eqref{eq:stg1_1} and \eqref{eq:stg1_2} model real and reactive power balance constraints, including 
allowable GIC-induced reactive power loss ($d_i^{\text{\tiny qloss}}$).
Constraints \eqref{eq:stg1_3} ensure that the apparent power flow does not exceed its limit when the line is closed. Constraints \eqref{eq:stg1_4}--\eqref{eq:stg1_7} model Ohm's law. Constraints \eqref{eq:stg1_8}--\eqref{eq:stg1_12} model the MISOCP relaxation of the AC power flow equations using the formulations discussed in \cite{kocuk2017new}. Notation $\langle x_i,x_j\rangle^{\text{\tiny MC}}$ models the standard McCormick relaxation of a bilinear term $x_i\cdot x_j$, which is very effective for ACOPF problems \cite{kocuk2017new,lu2018tight,narimani2018comparison,nagarajan2016optimal,molzahn2019survey}, and also for generic nonlinear programs with bilinear terms \cite{nagarajan2016tightening}. Constraints \eqref{eq:stg1_13} ensure that a generator is turned off when all the lines and transformers connected to that generator are off. 

In the second-stage problem, constraints \eqref{eq:stg2_1}--\eqref{eq:stg2_3} calculate valid GICs in the DC network for lines that are switched on. Else, they are deactivated with the big-M coefficients, where $M^{+}_{\ell} = \overline{v}^{\text{\tiny d}} + \tilde{\xi}_{\ell}$, $M^{-}_{\ell} = \overline{v}^{\text{\tiny d}} - \tilde{\xi}_{\ell}$. Constraints \eqref{eq:stg2_4} model nodal balance equation for GICs in the DC network. Note that, in \eqref{eq:stg2_4}, $a_m=0$ when $m$ is not a grounded neutral node. Constraints \eqref{eq:stg2_5} and \eqref{eq:stg2_6} calculate the effective GICs for each type of transformer (see Table \ref{tab-effGIC}), which in-turn is used to calculate the reactive power losses induced in the AC network, as shown in constraints \eqref{eq:stg2_7}. 

\noindent
\textbf{Interpretation of the two-stage DRO formulation} Once the uncertain GMDs ($\tilxi$) are realized and the decisions in the AC network on line switching ($\boldsymbol{\zza}$), voltage magnitude ($\boldsymbol{v}$), and allowable reactive power losses $\boldsymbol{d^{\text{\tiny qloss}}}$ (interpreted as the amount of reactive power loss that will not cause excessive voltage drops. Exceeding this value would correspond to the cost of installing or using a device to counteract the reactive losses), the second-stage problem calculates the effective GICs and the actual reactive power losses, given by $\sum_{e \in \mathcal{E}^{\tau}_i} k_e u^{\text{\tiny d}}_{ei}$. If the calculated reactive power losses in second-stage exceeds the allowable $\boldsymbol{d^{\text{\tiny qloss}}}$ from the first-stage, then appropriate mitigation actions are updated in the AC network to mitigate the negative effects on the transformers. Implicitly, this formulation assumes that reactive losses smaller than $\boldsymbol{d^{\text{\tiny qloss}}}$ will not cause voltage problems (i.e., a high voltage).

In summary, the proposed two-stage DRO formulation minimizes the power generation cost, penalty cost for shedding loads and the worst-case expected cost occurred by damaged transformers. 

\section{Solution methodologies}
\label{subsec:algo}
In this section, we describe several solution approaches to the two-stage DRO formulation. For ease of exposition, we rewrite the formulation using matrix notation as follows: 
\begin{subequations}
	\label{DRO_ORG}
	\begin{align}
	\min_{\zb \in \mathcal{F}} \ & \qb^{\text{\tiny T}} \zb + \sup_{\mathbb{P} \in \mathcal{D}} \mathbb{E}_{\mathbb{P}}[ \mathcal{Q} (\zb, \tilxi) ] 
	\end{align}
	where $\mathcal{Q} (\zb, \tilxi)$ is the optimal value of the following problem:
	\begin{align}
	\mathcal{Q} (\zb, \tilxi) = \min_{\xb \in \mathcal{X}(\zb, \tilxi)} \ \cb^{\text{\tiny T}} \xb. 
	\end{align}
\end{subequations}	
Note that $\zb$ and $\mathcal{F}$ are the solution vector and the feasible region, respectively, of the first-stage problem \eqref{eq:stg1} and  $\mathcal{X}(\zb, \tilxi) = \{ \xb \in \mathbb{R}^{n} \ : \ \Ab \xb + \Bb(\tilxi) \zb \geq \db \}$ is the feasible region of the second-stage problem.
\begin{proposition} 
\label{prop-1}
Problem \eqref{DRO_ORG} is equivalent to the following $\min$-$\max$-$\min$ problem:	
\begin{subequations}
\small
\begin{align*}
\min_{\zb \in \mathcal{F}} \ & \qb^{\text{\tiny T}} \zb + \mu_{\text{\tiny E}} \lambda_{\text{\tiny E}} + \mu_{\text{\tiny N}} \lambda_{\text{\tiny N}} + \bigg\{ \max_{\tilxi \in \Xi} \  \bigg( \min_{x \in \mathcal{X}(\zb, \tilxi)} \ \cb^{\text{\tiny T}} \xb \bigg) - \lambda_{\text{\tiny E}} \tilxiE - \lambda_{\text{\tiny N}}  \tilxiN \bigg\}.
\end{align*}    
\end{subequations}
\end{proposition}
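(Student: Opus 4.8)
The plan is to fix the first-stage vector $\zb \in \mathcal{F}$ and dualize only the inner worst-case expectation $\sup_{\Pb \in \mathcal{D}} \Eb_{\Pb}[\mathcal{Q}(\zb, \tilxi)]$, treating $\mathcal{Q}(\zb, \tilxi)$ as a fixed function of $\tilxi$. The crucial observation is that $\mathcal{D}$ restricts a candidate distribution $\Pb$ only through three linear functionals: the two first moments $\Eb_{\Pb}[\tilxiE] = \mu_{\text{\tiny E}}$ and $\Eb_{\Pb}[\tilxiN] = \mu_{\text{\tiny N}}$, and the normalization $\Pb\{\tilxi \in \Xi\} = 1$. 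Hence the inner supremum is a linear program over the cone of nonnegative measures supported on $\Xi$, i.e., a generalized moment problem, which I would write as $\sup_{\Pb \ge 0}\{\int_\Xi \mathcal{Q}(\zb,\tilxi)\,d\Pb : \int_\Xi \tilxiE\,d\Pb = \mu_{\text{\tiny E}},\ \int_\Xi \tilxiN\,d\Pb = \mu_{\text{\tiny N}},\ \int_\Xi d\Pb = 1\}$.

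Next I would form the Lagrangian dual of this moment LP, assigning multipliers $\lambda_{\text{\tiny E}}$ and $\lambda_{\text{\tiny N}}$ to the two mean constraints and a scalar $\rho$ to the normalization constraint. Collecting the measure-dependent terms, the dual objective is $\mu_{\text{\tiny E}}\lambda_{\text{\tiny E}} + \mu_{\text{\tiny N}}\lambda_{\text{\tiny N}} + \rho$, and dual feasibility becomes the semi-infinite constraint $\rho + \lambda_{\text{\tiny E}}\tilxiE + \lambda_{\text{\tiny N}}\tilxiN \ge \mathcal{Q}(\zb,\tilxi)$ for every $\tilxi \in \Xi$. Since $\rho$ carries coefficient $+1$ in a minimization, at optimality it is driven down to $\rho = \max_{\tilxi \in \Xi}\big(\mathcal{Q}(\zb,\tilxi) - \lambda_{\text{\tiny E}}\tilxiE - \lambda_{\text{\tiny N}}\tilxiN\big)$, which eliminates the infinitely many constraints in favor of a single inner maximization over $\Xi$. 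Substituting $\mathcal{Q}(\zb,\tilxi) = \min_{\xb\in\mathcal{X}(\zb,\tilxi)}\cb^{\text{\tiny T}}\xb$ and re-absorbing the outer $\min_{\zb \in \mathcal{F}}$ together with the infimum over $(\lambda_{\text{\tiny E}},\lambda_{\text{\tiny N}})$ into one minimization then reproduces exactly the claimed $\min$-$\max$-$\min$ expression.

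The step that requires care — and the main obstacle — is justifying strong duality for the moment LP, namely that the inner supremum equals its Lagrangian dual with zero gap and that the dual infimum is attained. I would invoke the standard duality theory for generalized moment problems (of Isii / Shapiro type), whose hypotheses I must verify here: (i) $\Xi$ is compact, since it is the closed bounded half-disk of radius $R$; (ii) $\mathcal{Q}(\zb,\cdot)$ is finite and upper semicontinuous in $\tilxi$ — indeed, for fixed $\zb$ it is the value of a minimization LP whose right-hand side is affine in $(\tilxiE,\tilxiN)$ and which has relatively complete recourse through the slacks $s_i$, hence a finite convex, and therefore continuous, function on $\Xi$; and, most importantly, (iii) a Slater-type strict-feasibility condition on the moments. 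The natural such condition is that $(\mu_{\text{\tiny E}},\mu_{\text{\tiny N}})$ lie in the interior of $\mathrm{conv}(\Xi)$, which ensures that the moment vector $(1,\mu_{\text{\tiny E}},\mu_{\text{\tiny N}})$ is interior to the moment cone, so that $\mathcal{D}$ admits a strictly interior member. Under these conditions the per-$\zb$ dualization is exact; I would note that this interiority assumption is mild and consistent with the support set and mean depicted in Figure \ref{fig:SupportSets}. Finally, since the equality holds for every fixed $\zb \in \mathcal{F}$, taking the minimum over $\zb$ preserves it and yields the stated reformulation.
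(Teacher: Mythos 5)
Your proposal follows essentially the same route as the paper's proof: rewrite $\sup_{\mathbb{P}\in\mathcal{D}}\mathbb{E}_{\mathbb{P}}[\mathcal{Q}(\zb,\tilxi)]$ as a moment LP over measures on $\Xi$, pass to the Lagrangian dual with multipliers for the two mean constraints and the normalization, and eliminate the normalization multiplier by noting that at optimality it equals the inner maximum over $\Xi$. The only substantive addition is your explicit verification of strong duality for the moment problem (compactness of $\Xi$, continuity of $\mathcal{Q}(\zb,\cdot)$, and the Slater-type condition that the mean lies in the interior of the support), which the paper's proof simply takes for granted when it asserts the dual; this is a welcome tightening rather than a different argument.
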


\begin{proof}
The worst-case expected value $\sup_{\mathbb{P} \in \mathcal{D}} \mathbb{E}_{\mathbb{P}}[ \mathcal{Q} (\zb, \tilxi) ]$ can be written as
\begin{subequations}
\small
\begin{align*}
\max \ & \int_{\tilxi \in \Xi}  \mathcal{Q} (\zb, \tilxi) \ d\mathbb{P} \\
\mbox{s.t.} \
& \int_{\tilxi \in \Xi} \tilxiE \ d\mathbb{P} = \mu_{\text{\tiny E}}, \ \int_{\tilxi \in \Xi} \tilxiN \ d\mathbb{P} = \mu_{\text{\tiny N}}, \ \int_{\tilxi \in \Xi} \ d\mathbb{P} = 1. 
\end{align*}
\end{subequations}
By taking the dual, we obtain
\begin{align*}
\min \ & \mu_{\text{\tiny E}} \lambda_{\text{\tiny E}} + \mu_{\text{\tiny N}} \lambda_{\text{\tiny N}} + \eta \\
\mbox{ s.t. } \ & \lambda_{\text{\tiny E}} \tilxiE + \lambda_{\text{\tiny N}}  \tilxiN  + \eta \geq  \ \mathcal{Q} (\zb, \tilxi), \ \forall \tilxi \in \Xi. 
\end{align*}
where $\lambda_{\text{\tiny E}}$, $\lambda_{\text{\tiny N}}$, and $\eta$ are dual variables.
At optimality, we have $\eta^* = \max_{\tilxi \in \Xi} \ \{ \mathcal{Q} (\zb, \tilxi) - \lambda_{\text{\tiny E}} \tilxiE - \lambda_{\text{\tiny N}}  \tilxiN \}$, which leads to the proposed formulation.
\end{proof} 

\begin{proposition}
\label{prop-2}
Given solution ($\zb$, $\lambda_{\text{\tiny E}}$, $\lambda_{\text{\tiny N}}$), the inner $\max$-$\min$ problem can be reformulated as the following $\max$ problem:
\begin{subequations}
\small
	\label{Subproblem}	
	\begin{align}
	\mathcal{Z}(\zb,\lambda_{\text{\tiny E}}, \lambda_{\text{\tiny N}}, \tilxi) = \max_{\tilxi \in \Xi} \ &  \cb^{\text{\tiny T}} \xb - \lambda_{\text{\tiny E}} \tilxiE - \lambda_{\text{\tiny N}}  \tilxiN  \\
	\mbox{s.t.} \ 
	& 0 \leq \Ab \xb + \Bb(\tilxi) \zb - \db \leq M(\bm{1}-\bm{\alpha}), \nonumber\\
	& 0 \leq \betab \leq M \bm{\alpha}, \  \Ab^{\text{\tiny T}} \betab = \cb, \ \bm{\alpha} \in \mathbb{B}^{m}. \nonumber
	\end{align}
\end{subequations}
where $M$ is a sufficiently large value.
\end{proposition}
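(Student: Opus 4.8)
The plan is to eliminate the inner minimization by replacing it with its Karush--Kuhn--Tucker (KKT) optimality conditions, thereby collapsing the $\max$-$\min$ structure into a single-level $\max$ problem. First I would observe that, for a fixed first-stage solution $\zb$ and a fixed realization $\tilxi$, the inner problem $\min_{\xb \in \mathcal{X}(\zb,\tilxi)} \cb^{\text{\tiny T}}\xb$ is a linear program in $\xb$ with constraint $\Ab\xb + \Bb(\tilxi)\zb - \db \geq 0$ and $\xb$ otherwise free. Since the objective is linear and $\xb$ is unrestricted in sign, the LP dual assigns a nonnegative multiplier vector $\betab$ to the inequality block, with dual feasibility $\Ab^{\text{\tiny T}}\betab = \cb$ and $\betab \geq 0$; this already matches two of the constraints appearing in the claimed formulation.

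Next I would write out the full KKT system, which for a linear program is both necessary and sufficient for optimality: primal feasibility $\Ab\xb + \Bb(\tilxi)\zb - \db \geq 0$, dual feasibility $\betab \geq 0$ and $\Ab^{\text{\tiny T}}\betab = \cb$, and complementary slackness $\beta_i\,(\Ab\xb + \Bb(\tilxi)\zb - \db)_i = 0$ for every row $i$. The key consequence is that a vector $\xb$ satisfies this system (together with some $\betab$) if and only if it is an optimal solution of the inner LP; in particular every such $\xb$ attains the common value $\cb^{\text{\tiny T}}\xb = \mathcal{Q}(\zb,\tilxi)$.

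The central step is to linearize the bilinear complementary-slackness conditions using the binary vector $\bm{\alpha}$ and a big-$M$ constant. For each row $i$, the disjunction ``either the primal slack is zero or $\beta_i$ is zero'' is encoded by $0 \leq (\Ab\xb + \Bb(\tilxi)\zb - \db)_i \leq M(1-\alpha_i)$ together with $0 \leq \beta_i \leq M\alpha_i$: choosing $\alpha_i=1$ forces the slack to vanish while leaving $\beta_i$ free up to $M$, whereas $\alpha_i=0$ forces $\beta_i=0$ while leaving the slack free up to $M$. Collecting these over all rows reproduces exactly the constraint block $0 \leq \Ab\xb + \Bb(\tilxi)\zb - \db \leq M(\bm{1}-\bm{\alpha})$, $0 \leq \betab \leq M\bm{\alpha}$, $\bm{\alpha}\in\mathbb{B}^{m}$ of the stated problem.

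Finally, I would assemble these pieces: maximizing $\cb^{\text{\tiny T}}\xb - \lambda_{\text{\tiny E}}\tilxiE - \lambda_{\text{\tiny N}}\tilxiN$ jointly over $(\xb,\betab,\bm{\alpha},\tilxi)$ subject to the KKT block and $\tilxi\in\Xi$ is equivalent to $\max_{\tilxi\in\Xi}\{\mathcal{Q}(\zb,\tilxi) - \lambda_{\text{\tiny E}}\tilxiE - \lambda_{\text{\tiny N}}\tilxiN\}$, since KKT feasibility pins $\xb$ to an inner optimizer and hence $\cb^{\text{\tiny T}}\xb = \mathcal{Q}(\zb,\tilxi)$ for every feasible $\tilxi$. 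I expect the main obstacle to be justifying that the big-$M$ reformulation is \emph{exact}: one must argue that $M$ can be chosen large enough to bound both the primal slacks and the optimal dual multipliers uniformly over $\tilxi\in\Xi$, so that no genuinely optimal $(\xb,\betab)$ pair is cut off. This implicitly relies on the inner LP being feasible and bounded (relatively complete recourse) for every $\tilxi\in\Xi$, which I would state as a standing assumption.
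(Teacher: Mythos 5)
Your proof is correct and follows essentially the same route as the paper: replace the inner LP by its optimality conditions (primal feasibility, dual feasibility $\Ab^{\text{\tiny T}}\betab=\cb$, $\betab\ge 0$, and complementary slackness) and then linearize the complementarity conditions with the binary vector $\bm{\alpha}$ and a big-$M$ constant. Your added discussion of why $M$ can be chosen uniformly over $\tilxi\in\Xi$ and the implicit relatively-complete-recourse assumption is a useful elaboration of a point the paper leaves unstated, but it does not change the argument.
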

\begin{proof}
Consider the following $\max$-$\min$ problem:
\begin{subequations}
\small
\begin{align*}
\max_{\tilxi \in \Xi} \ \bigg( \min_{x \in \mathcal{X}(\zb, \tilxi)} \ \cb^{\text{\tiny T}} \xb \bigg)  - \lambda_{\text{\tiny E}}  \tilxiE - \lambda_{\text{\tiny N}}  \tilxiN 
\end{align*}
\end{subequations}
Using the complementary slackness condition, we obtain
\begin{subequations}
\small
	\begin{align}
	\max_{\tilxi \in \Xi, \xb \in \mathbb{R}^n, \betab \in \mathbb{R}^m_+} \ &  \cb^{\text{\tiny T}} \xb - \lambda_{\text{\tiny E}} \tilxiE - \lambda_{\text{\tiny N}}  \tilxiN  \\
	\mbox{s.t.} \ 
	& \Ab \xb + \Bb(\tilxi) \zb \geq \db, \ \Ab^{\text{\tiny T}} \betab = \cb, \label{PD-feasiblity} \\
	& \betab^{\text{\tiny T}} (\Ab \xb + \Bb(\tilxi) \zb - \db ) = 0. \label{complementary}
	\end{align}
\end{subequations}
where $\betab$ is a dual vector, \eqref{PD-feasiblity} is primal and dual feasibility and \eqref{complementary} is the complementary slackness.
By introducing a big-$M$ coefficient, constraints \eqref{complementary} can be expressed as linear constraints, which leads to the formulation in \eqref{Subproblem}.
\end{proof}

\subsection{Column-and-Constraint Generation (CCG) Algorithm}
\label{subsec:ccg}
Using propositions \eqref{prop-1} and \eqref{prop-2}, the two-stage DRO problem with a convex support-set
is exactly solvable with the CCG algorithm \cite{zeng2013solving}. Since
the convex support set has infinitely many extreme points the CCG approach can be computationally intensive.
To improve the computational performance of the CCG, we develop \textit{two} tractable solution approaches that provide upper and lower bounds on the optimal value of problem \eqref{DRO_ORG}. 

\vspace{0.4cm}
\noindent
\underline{\textit{Polyhedral support set}}
\begin{figure}[h!]
	\centering
	\includegraphics[scale=0.28]{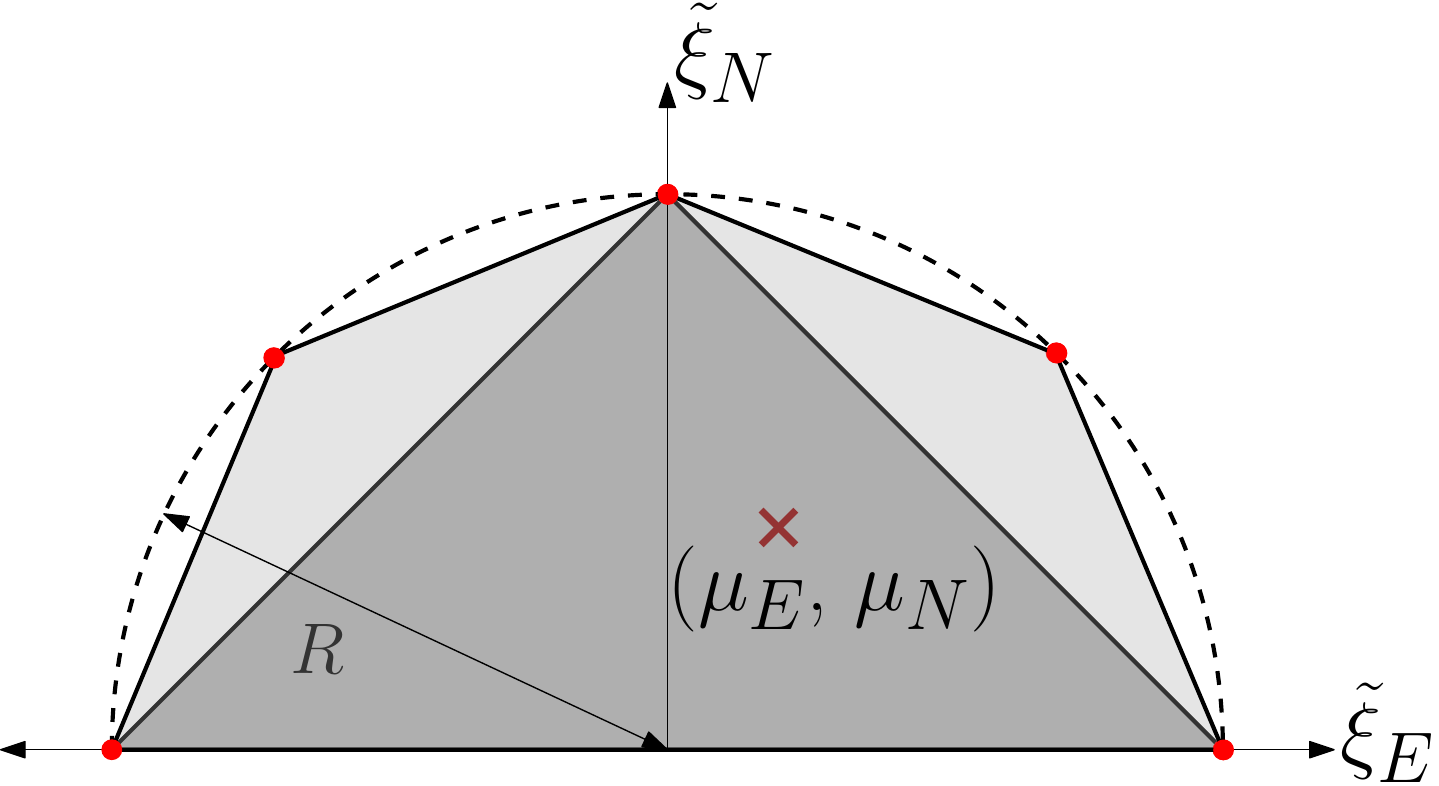}
	\caption{Polyhedral support sets with $3$ and $5$ extreme points.}
	\label{fig:poly}
\end{figure}
\begin{algorithm}[!h]
    \small
	\caption{\small CCG for $2$-stage DRO with polyhedral support set}
	\label{algo:CCG}
	\begin{algorithmic}[1]
		\STATE Set LB=$-\infty$, UB=$\infty$, $t=0$ and $\mathcal{T}=\emptyset$.
		\STATE Update LB by solving the following master problem:
		\begin{subequations}
			\begin{align}
			\text{LB} = \min_{\zb \in \mathcal{F}} \ & \qb^{\text{\tiny T}} \zb + \mu_{\text{\tiny E}} \lambda_{\text{\tiny E}} + \mu_{\text{\tiny N}} \lambda_{\text{\tiny N}} + \eta \\ 
			\mbox{ s.t. } \ 
			& \eta \geq \cb^{\text{\tiny T}} \xb^{\ell} - \lambda_{\text{\tiny E}} {\xi}_{\text{\tiny E}}^{\ell} - \lambda_{\text{\tiny N}} {\xi}_{\text{\tiny N}}^{\ell} , \ \forall \ell \in \mathcal{T}, \\
			& \Ab \xb^{\ell} + \Bb(\tilxi^{\ell}) \zb \geq \db, \ \forall \ell \in \mathcal{T}.
			\end{align}
		\end{subequations}
		-- Record an optimal solution $\zb^*, \lambda_{\text{\tiny E}}^*, \lambda_{\text{\tiny N}}^*$, and $\eta^*$.
		\STATE Solve the following problem:
		\begin{align*}
		\max_{\tilxi \in \Xi^N} \ \bigg\{ \mathcal{Q}(\zb^*, \tilxi) - \lambda_{\text{\tiny E}}^*  \tilxiE - \lambda_{\text{\tiny N}}^*  \tilxiN \bigg\}
		\end{align*}		
		 -- Record an optimal $\tilxi^*$ and the optimal value $\mathcal{Z}(\zb^*,\lambda_{\text{\tiny E}}^*, \lambda_{\text{\tiny N}}^*, \tilxi^*)$
		\STATE Update UB by 
		\begin{align*}
		\text{UB} = \min \{ \text{UB}, \ \qb^{\text{\tiny T}} \zb^* + \mu_{\text{\tiny E}} \lambda_{\text{\tiny E}}^* + \mu_{\text{\tiny N}} \lambda_{\text{\tiny N}}^* + \mathcal{Z}(\zb^*,\lambda_{\text{\tiny E}}^*, \lambda_{\text{\tiny N}}^*, \tilxi^*) \}
		\end{align*}
		\IF {$(\text{UB}-\text{LB})/\text{UB} \leq \epsilon$} 
		\STATE Stop and return $\zb^*$ as an optimal solution.
		\ELSE 
		\STATE Update ${\xi}_{\text{\tiny E}}^{t+1}=\tilde{\xi}_{\text{\tiny E}}^*$, ${\xi}_{\text{\tiny N}}^{t+1}=\tilde{\xi}_{\text{\tiny N}}^*$, $\mathcal{T} = \mathcal{T} \cup \{t+1\}$ and $t=t+1$.
		\STATE Go to step 2 and solve the updated master problem.
		\ENDIF			
	\end{algorithmic}
\end{algorithm}
\\
The first approach approximates the convex support set with a polytope that has $N$ extreme points.
When the polytope is a subset (resp. superset) of the support set (see Figure \ref{fig:poly}), it lower (resp. upper) bounds the optimal value of the original problem \eqref{DRO_ORG}.
In this section, we solve problem \eqref{DRO_ORG} with the polytope $\Xi^{\text{\tiny N}}$ defined as $\text{conv} \{ (\hat{\xi}_{\text{\tiny E}}^1, \hat{\xi}_{\text{\tiny N}}^1), \ldots, (\hat{\xi}_{\text{\tiny E}}^N, \hat{\xi}_{\text{\tiny N}}^N) \}$. Since $\Xi^N$ is also a convex set, strong duality (proposition \ref{prop-1}) holds. Thus, problem \eqref{DRO_ORG} with support set $\Xi^N$ is also solvable using the CCG algorithm (algorithm \eqref{algo:CCG}). Note that the CCG algorithm is guaranteed to converge in a finite number of iterations since the number of extreme points of $\Xi^N$ is finite. 

The CCG algorithm is a cutting plane method. The algorithm iteratively refines the feasible domain of the two-stage DRO problem by sequentially generating a set of recourse variables and their associated constraints. 
In algorithm \eqref{algo:CCG}, LB and UB denote the incumbent lower and upper bounds of problem \eqref{DRO_ORG}, respectively. Step-2 evaluates the lower bound for problem \eqref{DRO_ORG} by solving the relaxed master problem. Step-3 finds the worst-case value of $\tilxi^*$ by enumerating all $N$ extreme points and picks an optimal value. Step-4 updates the upper bound for the worst-case value of $\tilxi^*$. Steps 5-7 terminate the algorithm if the optimality gap is within a specified $\epsilon$, else steps 8-9 augment the master problem with the constraints and variables associated with extreme point $\tilxi^*$ and continues the iteration until an optimal solution for problem \eqref{DRO_ORG} is found.

\subsection{Triangle support set: Exact tractable reformulation}
\label{subsec:triangle}
As a special case, we further approximate the convex support set with a polytope that has \textit{three} extreme points, i.e., $\Xi^{\text{\tiny 3}}$ (see Figure \ref{fig:poly}). Once again, the CCG algorithm (algorithm \eqref{algo:CCG}) can be used to solve model \eqref{DRO_ORG}. However, with a triangle support set, we derive an exact monolithic reformulation as discussed below. This reformulation is solved efficiently using off-the-shelf commercial solvers such as CPLEX or Gurobi. In the numerical results section, we demonstrate the computational efficacy of this exact reformulation in detail. 
\begin{proposition}
	For fixed $z \in \mathcal{F}$, the worst-case expected value $\sup_{\mathbb{P} \in \mathcal{D}} \mathbb{E}_{\mathbb{P}}[ \mathcal{Q} (\zb, \tilxi) ]$ with $\Xi^{\text{\tiny 3}}$ is equivalent to
	\begin{subequations}
		\label{PrimalLP}
		\small
		\begin{align}
		\max_{p \in \mathbb{R}^3_+} \ & \sum_{k=1}^3 \mathcal{Q}(\zb, \hat{\bm{\xi}}^k) p_k \\
		\mbox{s.t.} \ 
		& \sum_{k=1}^3 \hat{\xi}_{\text{\tiny E}}^k p_k = \mu_{\text{\tiny E}}, \ 
		\sum_{k=1}^3 \hat{\xi}_{\text{\tiny N}}^k p_k = \mu_{\text{\tiny N}}, \ 
		\sum_{k=1}^3 p_k = 1.  \label{PrimalLP-3}
		\end{align}
		where $\{ \hat{\bm{\xi}}^1, \hat{\bm{\xi}}^2, \hat{\bm{\xi}}^3 \}$ are the extreme points of the set $\Xi^{\text{\tiny 3}}$.
	\end{subequations}
\end{proposition}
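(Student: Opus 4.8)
The plan is to exploit two facts: that the recourse value $\mathcal{Q}(\zb,\tilxi)$ is convex in $\tilxi$ for fixed $\zb$, and that a triangle is a simplex whose barycentric coordinates are affine. First I would establish convexity. For fixed $\zb$, the uncertain parameter $\tilxi$ enters the second-stage feasible set only through the affine map $\tilxi \mapsto \Bb(\tilxi)\zb$ (each $\tilde{\xi}_\ell$ is linear in $(\tilxiE,\tilxiN)$, and the big-$M$ terms $M^\pm_\ell$ are affine in $\tilxi$ and multiplied by the fixed quantities $\zza_{E_\ell}$). Taking the LP dual of $\min_{\xb}\{\cb^{\text{\tiny T}}\xb : \Ab\xb \ge \db - \Bb(\tilxi)\zb\}$ gives a dual feasible region $\{\betab \ge 0 : \Ab^{\text{\tiny T}}\betab = \cb\}$ that does not depend on $\tilxi$, so $\mathcal{Q}(\zb,\tilxi) = \max_{\betab}\betab^{\text{\tiny T}}(\db - \Bb(\tilxi)\zb)$ is a pointwise maximum of affine functions of $\tilxi$, hence convex and piecewise linear.

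The easy inequality ($\ge$) is an embedding: any feasible $p$ in \eqref{PrimalLP} defines the discrete measure $\mathbb{P}=\sum_{k=1}^3 p_k\,\delta_{\hat{\bm{\xi}}^k}$, which is supported on $\Xi^{\text{\tiny 3}}$ and whose mean is $(\mu_{\text{\tiny E}},\mu_{\text{\tiny N}})$ by \eqref{PrimalLP-3}, so $\mathbb{P}\in\mathcal{D}$ and $\mathbb{E}_{\mathbb{P}}[\mathcal{Q}]$ equals the LP objective; thus the supremum dominates the LP optimum. For the reverse inequality ($\le$), I would use barycentric coordinates. Because $\hat{\bm{\xi}}^1,\hat{\bm{\xi}}^2,\hat{\bm{\xi}}^3$ are three affinely independent points of $\mathbb{R}^2$, every $\tilxi\in\Xi^{\text{\tiny 3}}$ has a unique representation $\tilxi=\sum_k\lambda_k(\tilxi)\hat{\bm{\xi}}^k$ with $\lambda_k(\tilxi)\ge0$, $\sum_k\lambda_k(\tilxi)=1$, and each $\lambda_k(\cdot)$ affine (hence bounded and measurable on the compact triangle). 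Convexity gives $\mathcal{Q}(\zb,\tilxi)\le\sum_k\lambda_k(\tilxi)\,\mathcal{Q}(\zb,\hat{\bm{\xi}}^k)$ pointwise.

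Integrating this bound against an arbitrary $\mathbb{P}\in\mathcal{D}$ and setting $p_k:=\mathbb{E}_{\mathbb{P}}[\lambda_k(\tilxi)]$, I obtain $\mathbb{E}_{\mathbb{P}}[\mathcal{Q}]\le\sum_k p_k\,\mathcal{Q}(\zb,\hat{\bm{\xi}}^k)$. The weights $p$ are LP-feasible: $p_k\ge0$, $\sum_k p_k=\mathbb{E}_{\mathbb{P}}[\sum_k\lambda_k]=1$, and by affineness of the $\lambda_k$ together with $\tilxi=\sum_k\lambda_k(\tilxi)\hat{\bm{\xi}}^k$ one has $\sum_k p_k\hat{\bm{\xi}}^k=\mathbb{E}_{\mathbb{P}}[\tilxi]=(\mu_{\text{\tiny E}},\mu_{\text{\tiny N}})$, which is exactly \eqref{PrimalLP-3}. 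Hence $\mathbb{E}_{\mathbb{P}}[\mathcal{Q}]$ is bounded by the LP optimum, and taking the supremum over $\mathbb{P}\in\mathcal{D}$ yields $\le$. Combining the two inequalities proves the equivalence.

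I expect the crux to be the exactness of this reduction, which hinges on the support set being a nondegenerate triangle: affine independence of exactly $\dim+1=3$ points is what makes the barycentric weights unique and affine, so that $\mathbb{E}_{\mathbb{P}}[\lambda_k]$ simultaneously forms a probability vector and reproduces the prescribed mean. With four or more extreme points this fails—the coordinates are neither unique nor affine—which is precisely why the general $N$-vertex case is not reducible to a finite scenario LP. An equivalent route, should one prefer to lean on Proposition~\ref{prop-1}, is to observe that convexity of $\mathcal{Q}(\zb,\cdot)$ makes the semi-infinite constraint $\lambda_{\text{\tiny E}}\tilxiE+\lambda_{\text{\tiny N}}\tilxiN+\eta\ge\mathcal{Q}(\zb,\tilxi)$ active only at vertices, collapsing the dual to a three-constraint LP whose own LP dual is \eqref{PrimalLP}; I would use this as a cross-check of the primal argument.
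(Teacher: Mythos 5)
Your proposal is correct and follows essentially the same route as the paper: both arguments rest on the convexity of $\mathcal{Q}(\zb,\cdot)$ in $\tilxi$ (via LP duality) and on the fact that a nondegenerate triangle in $\mathbb{R}^2$ is a simplex, so the barycentric weights reproducing $\mub$ are unique, affine, and nonnegative. If anything, your explicit two-inequality argument (discrete measure on the vertices for $\geq$, Jensen plus $p_k:=\mathbb{E}_{\mathbb{P}}[\lambda_k(\tilxi)]$ for $\leq$) supplies the justification that the paper compresses into the unproved identity $\sup_{\Pb \in \mathcal{D}} \Eb_{\Pb}[\mathcal{Q}(\zb, \tilxi)] = \sup \{ y \mid (\mub, y) \in \mathcal{H} \}$.
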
 
\begin{proof}For fixed $z \in \mathcal{F}$, the function $\mathcal{Q}(\zb, \tilxi)$ is convex in $\tilxi$. Let $\mathcal{H}$ be a convex hull of $\{(\tilxi, y) : \tilxi \in \Xi^{\text{\tiny 3}}, \ y =\mathcal{Q}(\zb, \tilxi) \}$.
Since taking expectation can be viewed as a convex combination, it follows that $(\mub, \mathcal{Q}(\zb, \mub)) \in \mathcal{H}$.
Since the set $\mathcal{D}$ is a mean-support ambiguity set where the support set $\Xi^{\text{\tiny 3}}$ is a simplex with 3 extreme points $\hat{\bm{\xi}}^1, \hat{\bm{\xi}}^2, \hat{\bm{\xi}}^3$, there is an unique convex combination of the extreme points which yields $\mub$.
Therefore, we have $\sup_{\Pb \in \mathcal{D}} \Eb_{\Pb}[\mathcal{Q}(\zb, \tilxi)] = \sup \{ y | (\mub, y) \in \mathcal{H} \} $, where $\mub$ is a convex combination of the 3 extreme points, $\mub = \hat{\bm{\xi}}^1 p_1 + \hat{\bm{\xi}}^2 p_2 + \hat{\bm{\xi}}^3 p_3$, and $y = \mathcal{Q}(\zb, \hat{\bm{\xi}}^1) p_1 + \mathcal{Q}(\zb, \hat{\bm{\xi}}^2) p_2 + \mathcal{Q}(\zb, \hat{\bm{\xi}}^3) p_3$.
\end{proof}

\begin{remark}
	The optimal solution $(p^*_1, p^*_2, p^*_3)$ to problem \eqref{PrimalLP} is uniquely determined due to the unique convex combination of the extreme points which yields $\mub$. In other words, we solve the following linear system of equations:
	\begin{align*}
	\begin{bmatrix}
	\hat{\xi}_{\text{\tiny E}}^1 & \hat{\xi}_{\text{\tiny E}}^2 & \hat{\xi}_{\text{\tiny E}}^3 \\
	\hat{\xi}_{\text{\tiny N}}^1 & \hat{\xi}_{\text{\tiny N}}^2 & \hat{\xi}_{\text{\tiny N}}^3 \\
	1 & 1 & 1 \\		
	\end{bmatrix}
	\begin{bmatrix}
	p_1 \\ p_2 \\ p_3
	\end{bmatrix}
	=
	\begin{bmatrix}
	\mu_{\text{\tiny E}} \\ \mu_{\text{\tiny N}} \\ 1
	\end{bmatrix}
	\end{align*}
	As long as $\mub \in \Xi^{\text{\tiny 3}}$, we have $p^*_1, p^*_2, p^*_3 \geq 0$.
\end{remark}

\begin{proposition}
	Problem \eqref{DRO_ORG} with triangle support set ($\Xi^{\text{\tiny 3}}$) is equivalent to the following stochastic program:
	\begin{subequations}
	\small
	\label{SLP_Primal}		
	\begin{align}
	\min_{\zb \in \mathcal{F}} \ & \qb^{\text{\tiny T}} \zb + \sum_{k=1}^3 p^*_k ( \cb^{\text{\tiny T}} \xb^{k} ) \\
	\mbox{s.t.} \
	& \Ab \xb^k \geq \db - \Bb (\hat{\bm{\xi}}^k) \zb, \  \forall k \in \{1,2,3\}.
	\end{align}	
	\end{subequations}
\end{proposition}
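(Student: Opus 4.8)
The plan is to chain together the preceding proposition and the Remark, and then to linearize the resulting expression by absorbing the three recourse minimizations into a single joint minimization over the first- and second-stage variables.

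First I would invoke the preceding proposition to replace, for each fixed $\zb \in \mathcal{F}$, the worst-case expected value $\sup_{\Pb \in \mathcal{D}} \Eb_{\Pb}[ \mathcal{Q} (\zb, \tilxi) ]$ by the finite-dimensional linear program \eqref{PrimalLP} over the probability vector $p \in \mathbb{R}^3_+$. The key observation, supplied by the Remark, is that because $\Xi^{\text{\tiny 3}}$ is a simplex with three affinely independent extreme points and $\mub \in \Xi^{\text{\tiny 3}}$, the moment constraints \eqref{PrimalLP-3} admit a unique solution $p^* = (p^*_1, p^*_2, p^*_3)$, and this solution is determined by $\hat{\bm{\xi}}^1, \hat{\bm{\xi}}^2, \hat{\bm{\xi}}^3$ and $\mub$ alone, hence does not depend on $\zb$. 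Consequently the feasible region of \eqref{PrimalLP} is the single point $p^*$, the maximization is vacuous, and $\sup_{\Pb \in \mathcal{D}} \Eb_{\Pb}[ \mathcal{Q} (\zb, \tilxi) ] = \sum_{k=1}^3 p^*_k \, \mathcal{Q}(\zb, \hat{\bm{\xi}}^k)$.

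Substituting this identity into \eqref{DRO_ORG} turns the DRO problem into $\min_{\zb \in \mathcal{F}} \, \qb^{\text{\tiny T}}\zb + \sum_{k=1}^3 p^*_k \, \mathcal{Q}(\zb, \hat{\bm{\xi}}^k)$. I would then expand each recourse value by its definition $\mathcal{Q}(\zb, \hat{\bm{\xi}}^k) = \min \{ \cb^{\text{\tiny T}}\xb^k : \Ab \xb^k \geq \db - \Bb(\hat{\bm{\xi}}^k)\zb \}$. Because the three recourse problems involve disjoint copies $\xb^1, \xb^2, \xb^3$ of the second-stage variables, coupled to $\zb$ only through their separate constraint blocks, the sum of the three inner minimizations collapses into a single joint minimization over $(\xb^1, \xb^2, \xb^3)$; merging this with the outer $\min_{\zb}$ yields precisely the monolithic stochastic program \eqref{SLP_Primal}.

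The step that needs the most care is this interchange of the summation with the minimization. It is valid exactly because each weight $p^*_k$ is nonnegative, which the Remark guarantees under $\mub \in \Xi^{\text{\tiny 3}}$; this lets me write $p^*_k \, \mathcal{Q}(\zb, \hat{\bm{\xi}}^k) = \min_{\xb^k} \, p^*_k \, \cb^{\text{\tiny T}}\xb^k$ over the feasible set $\{ \xb^k : \Ab\xb^k \geq \db - \Bb(\hat{\bm{\xi}}^k)\zb \}$. Had any weight been negative, the minimization would have become a maximization and the $\mathcal{Q}$-identity would fail, so the nonnegativity of $p^*$ is the crux of the equivalence. I would also record the standard relatively-complete-recourse assumption implicit in the two-stage model, under which each recourse minimum is attained wherever $\mathcal{Q}$ is finite, so that the reformulated joint minimum is well defined.
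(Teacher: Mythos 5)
Your proposal is correct and follows exactly the route the paper intends: the preceding proposition reduces the worst-case expectation to the three-point LP, the Remark pins down the unique weights $p^*$ independently of $\zb$, and the monolithic program follows by merging the three recourse minimizations (the paper leaves this chain implicit, stating the proposition without a written proof). Your explicit justification of the sum--min interchange via $p^*_k \geq 0$ is a careful addition but does not change the argument.
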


\section{Numerical Experiments}
\label{sec:numerical}
In this section, we conduct numerical experiments using the epri-21 system, which is specifically designed for GMD studies. Most parameters in our models can be obtained from  \cite{horton2012test}. The remaining parameters include $\kappa^{\text{\tiny l}} = 50,000 [\$/\text{p.u.}]$, $\kappa^{\text{\tiny s}} = 100,000 [\$/p.u.]$, $\overline{v}^{\text{\tiny d}} = 10,000$ [V], and $\overline{I}^{\text{\tiny eff}}_e = 2 \frac{\overline{s}_e}{\min(\underline{v}_i, \underline{v}_j)} \text{[Amp]}, \ \forall e_{ij} \in \mathcal{E}^{\tau}$ . 
For each system, we generate instances which vary the mean values $(\mu_{\text{\tiny E}}, \mu_{\text{\tiny N}})$ and the 
maximum magnitude $R$ of the geo-electric field (all units are in [V/km]).
Computations were performed with the HPC resources at Los Alamos National Laboratory with Intel Xeon CPU E5-2660v3, 2.60GHz and 120GB of memory.  Optimization models were solved using Gurobi v8.1.0 and were implemented in \texttt{C++}.

\subsection{The epri-21 system} 
\label{sec-epri21}
Figure \ref{fig:Map_epri21} shows a simplified diagram of the epri-21 system geo-located near Atlanta, GA. This system has $19$ buses, $7$ generators, $15$ transmission lines, $16$ transformers, and $8$ sub-stations. In the diagram, the blue lines are 500kV and the green lines are 345kV (see \cite{horton2012test} for details). 
\begin{figure}[h!]
    \centering
    \includegraphics[width=\columnwidth]{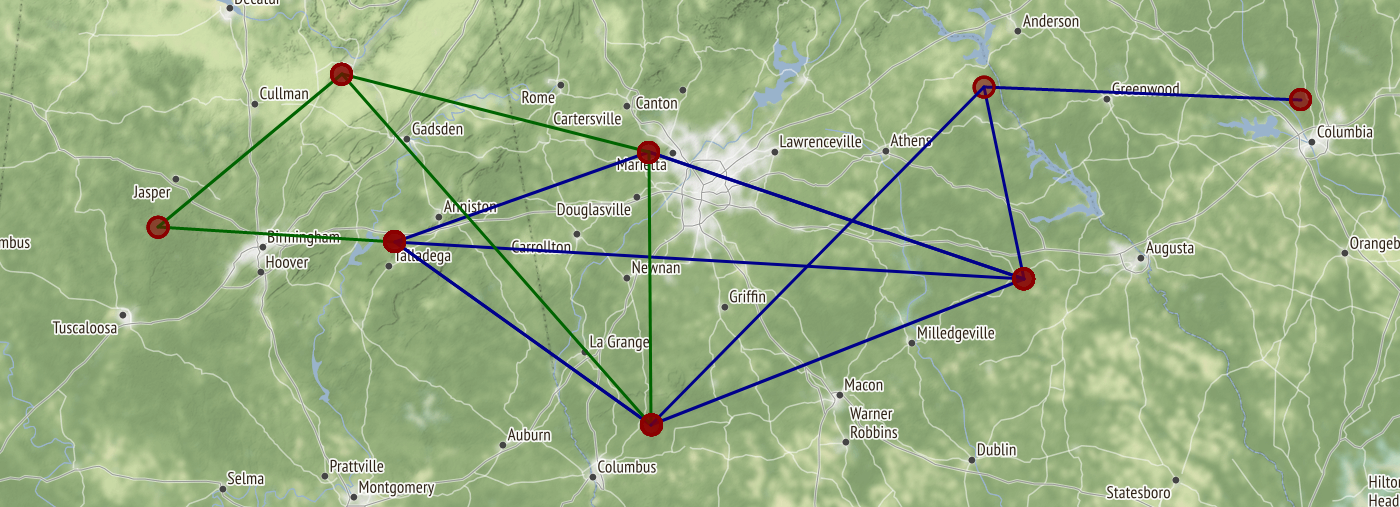}
    \caption{The epri-21 system}
    \label{fig:Map_epri21}
\end{figure}

\subsection{Uncertainty data sets} 
\label{subsec:uncertain_data}
\begin{figure}[h!]
    \centering
    \includegraphics[scale=0.265]{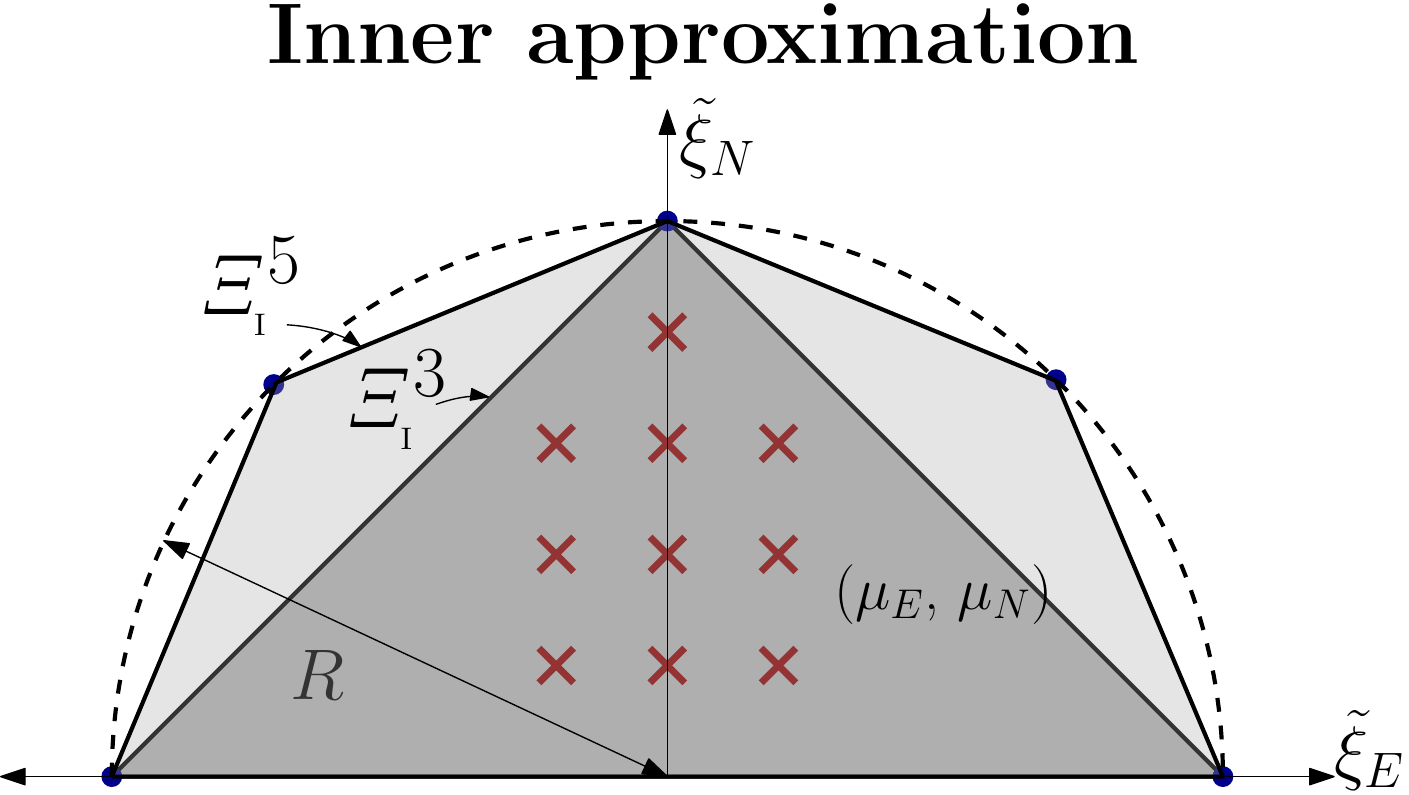}
    \includegraphics[scale=0.265]{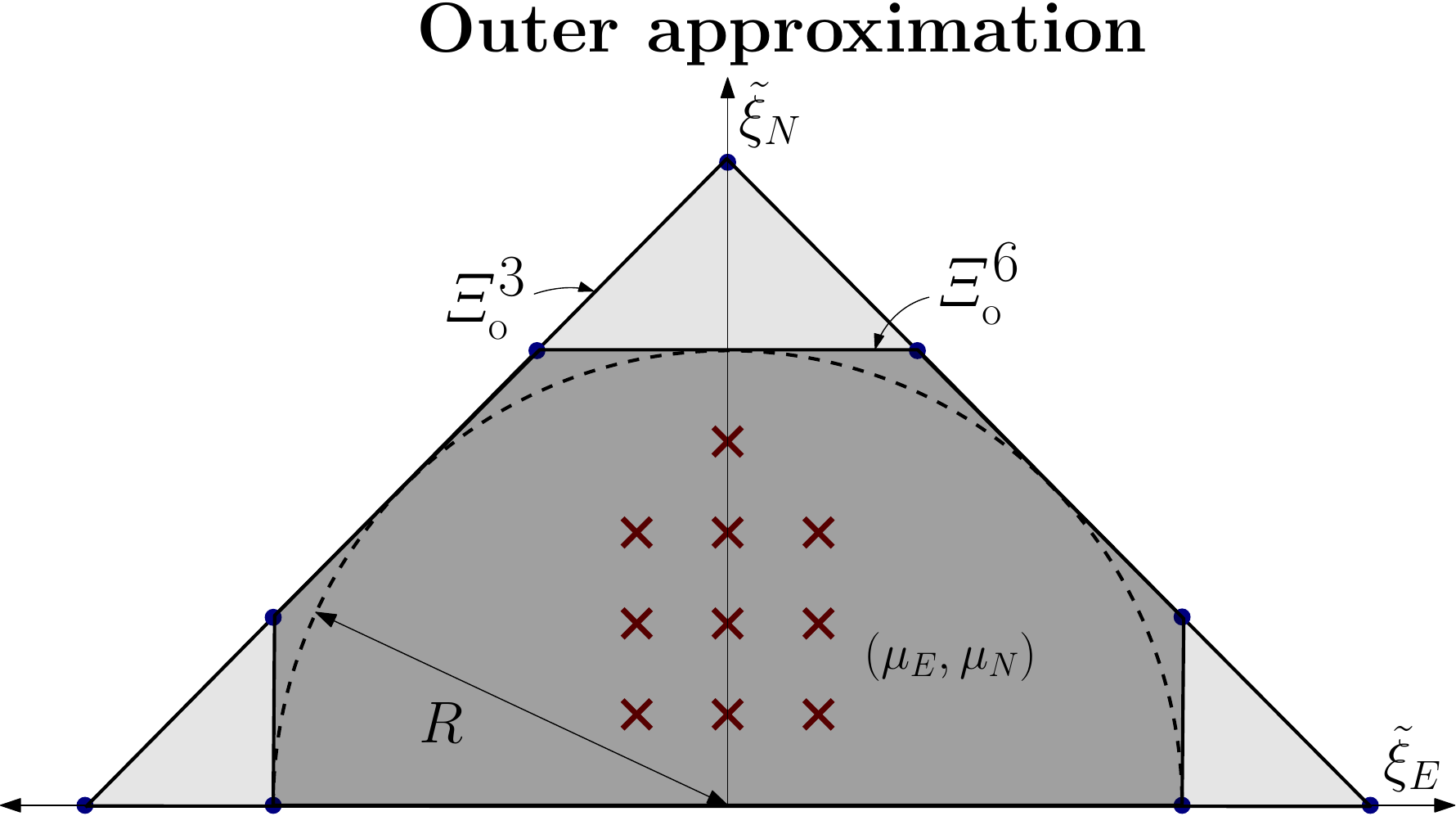}
    \caption{Polyhedral support sets that inner- (left) and outer- (right) approximate the nonlinear uncertainty set $\Xi$.}
    \label{fig:Approximate}
\end{figure}
We consider \textit{ten} different instances that vary the mean values $(\mu_{\text{\tiny E}}, \mu_{\text{\tiny N}})$
in proportion to $R$ (Table \ref{tab:10Instances}).
For each instance, we construct \textit{four} different polyhedral support sets: (1) triangle ($\Xi^3_{\text{\tiny I}}$) and (2) pentagon ($\Xi^5_{\text{\tiny I}}$) that inner-approximates the convex support set and (3) triangle ($\Xi^3_{\text{\tiny O}}$) and (4) hexagon ($\Xi^6_{\text{\tiny O}}$) that outer-approximates the convex support set as depicted in Figure \ref{fig:Approximate}. The objective function values of prob. \eqref{DRO_ORG} with these sets are non-increasing in the order: $\Xi^3_{\text{\tiny O}}$, $\Xi^6_{\text{\tiny O}}$, $\Xi$, $\Xi^5_{\text{\tiny I}}$, and $\Xi^3_{\text{\tiny I}}$. 

\begin{table}[h!]
    \small
    \centering
    \begin{tabular}{|c|c|c|c|}
    \hline
    Instance \# & $(\mu_{\text{\tiny E}}, \mu_{\text{\tiny N}})$ [V/km] & Instance \# & $(\mu_{\text{\tiny E}}, \mu_{\text{\tiny N}})$ [V/km] \\ \hline
    1 & (0, R/5) & 6 & (R/5, 2R/5) \\
    2 & (0, 2R/5) & 7 & (R/5, 3R/5) \\
    3 & (0, 3R/5) & 8 & (-R/5, R/5) \\
    4 & (0, 4R/5) & 9 & (-R/5, 2R/5) \\ 
    5 & (R/5, R/5) & 10 & (-R/5, 3R/5) \\ \hline
    \end{tabular}
    \caption{Ten different $(\mu_{\text{\tiny E}}, \mu_{\text{\tiny N}})$ depending on $R$ values.}
    \label{tab:10Instances}
\end{table}

\subsection{Quality of uncertainty set approximation}
\label{subsec:quality}
Since support set $\Xi^3_{\text{\tiny I}}$ has the lowest objective function value for the 4 polyhedral sets, we normalize all the objective function values with respect to that of $\Xi^3_{\text{\tiny I}}$.
Figure \ref{fig:NormalizedObjValues_epri21} shows the normalized objective function values of the DRO model for the 4 polyhedral sets on the $10$ instances that have $R=5$ (left) and $R=15$ (right). With support sets $\Xi^3_{\text{\tiny I}}$ and $\Xi^3_{\text{\tiny O}}$, the optimality gaps are less than $0.02\%$ and $0.49\%$ for $R=5$ and $R=15$, respectively.
With support sets $\Xi^5_{\text{\tiny I}}$ and $\Xi^6_{\text{\tiny O}}$, the optimality gaps are reduced to $0.01\%$ and $0.03\%$ when $R=5$ and $R=15$, respectively. Based on these optimality gaps for the epri-21 system, we observe that the triangle support set is a very good approximation of the convex support set. 

\begin{figure}[!h]
   \centering
   \includegraphics[scale=0.227]{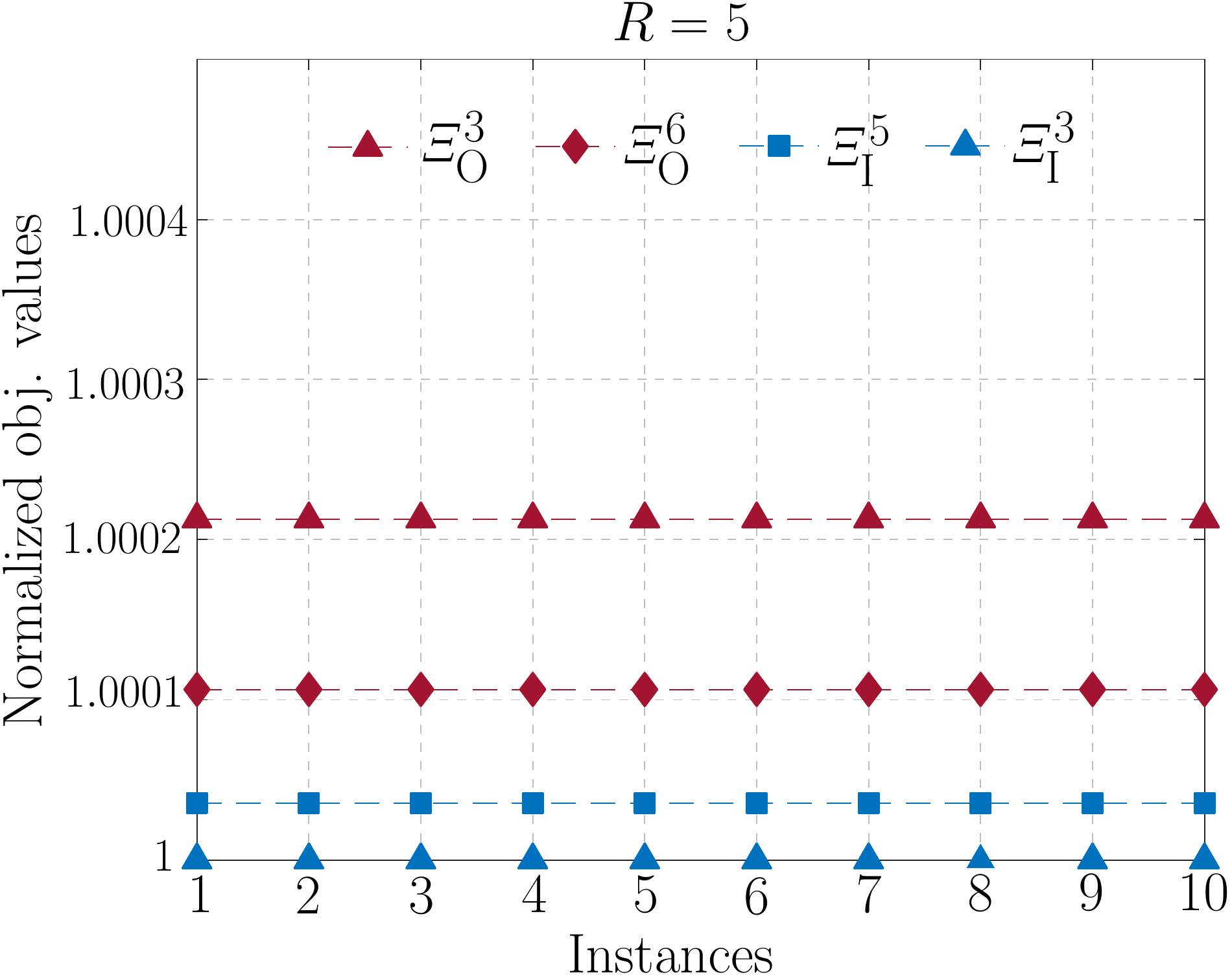}      
   \includegraphics[scale=0.227]{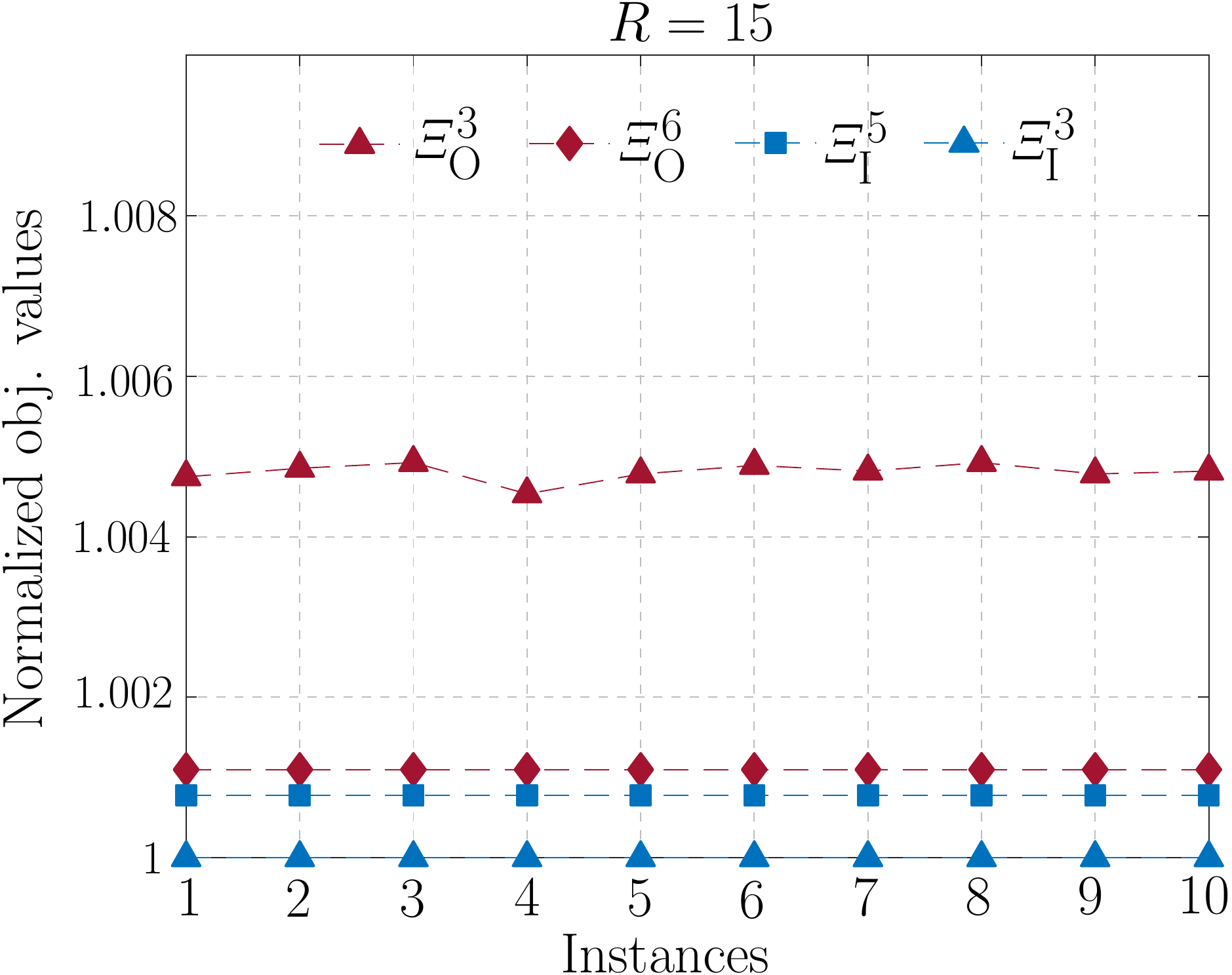}
   \caption{Normalized objective function values for $R=5,15$.} 
   \label{fig:NormalizedObjValues_epri21}
\end{figure}

\subsection{Computational performances}
\label{subsec:computational}
\begin{figure}[!h]
    \centering
    \includegraphics[scale=0.34]{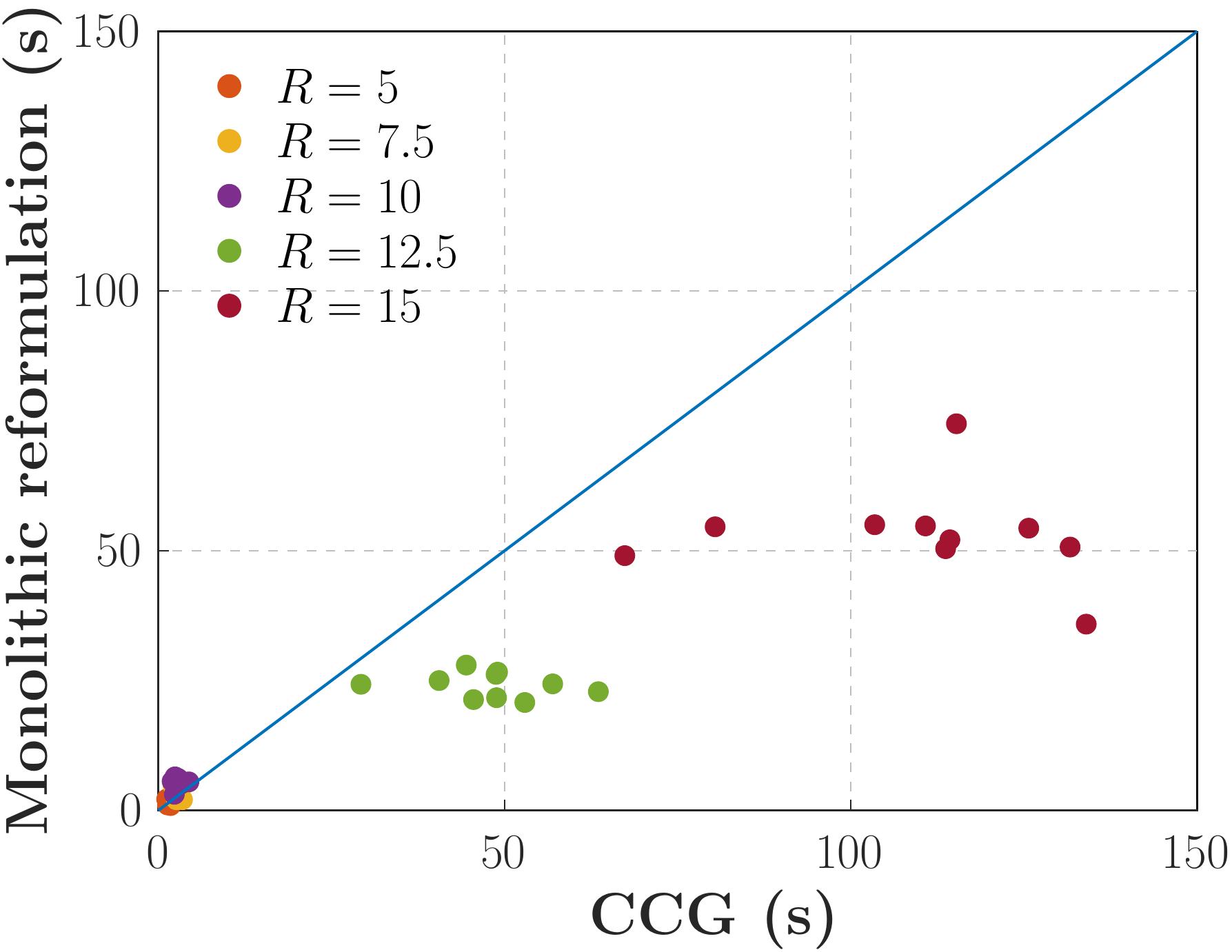}
    \caption{Computational times (sec.) of monolithic reformulation and CCG algorithm with the triangle support set ($\Xi^3_{\text{\tiny I}}$).}
    \label{fig:run_times_epri21}
\end{figure}
In this section, for $R \in \{5, 7.5, 10, 12.5, 15\}$, we construct $10$ instances with mean values described in Table \ref{tab:10Instances}. Figure \ref{fig:run_times_epri21} compares the exact monolithic reformulation (section \ref{subsec:triangle}) with the CCG algorithm (algorithm \ref{algo:CCG}) using the $\Xi^3_{\text{\tiny I}}$ formulation. 
All the points located below the 45 degree blue line are instances where the monolithic reformulation computationally outperforms the CCG algorithm. For larger uncertainty sets ($R=15$), the monolithic reformulation is on an average \textit{2.1} times faster than CCG. The tightness of the triangle-set approximation (see section \ref{subsec:quality}) and the computational efficacy of the exact reformulation provides evidence that the monolithic approach is very effective at solving the DRO.

\subsection{Planning mitigation solutions for uncertain GMDs}
\begin{figure}[!h]
  \centering
    \includegraphics[scale=0.351]{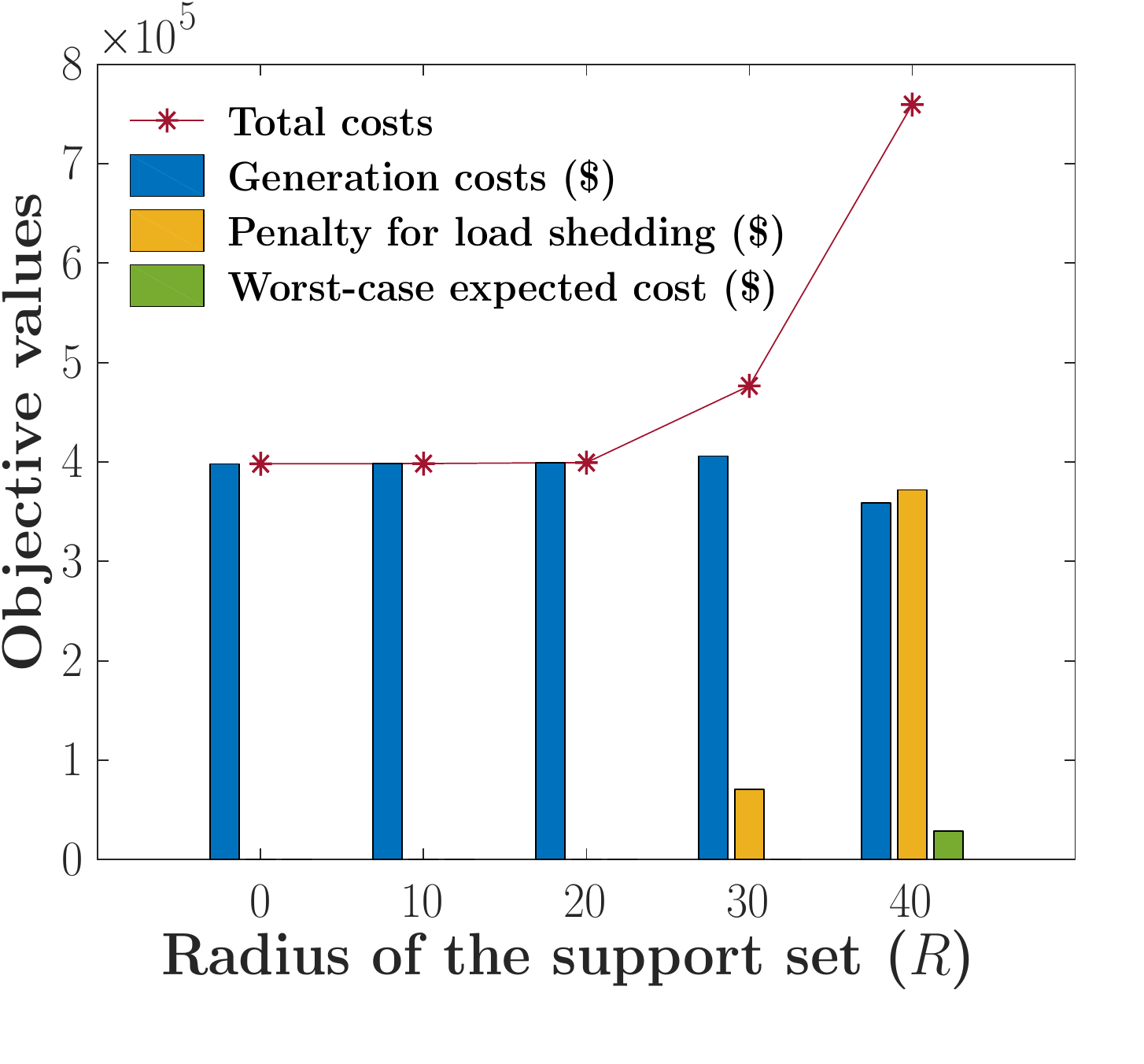}   \includegraphics[scale=0.351]{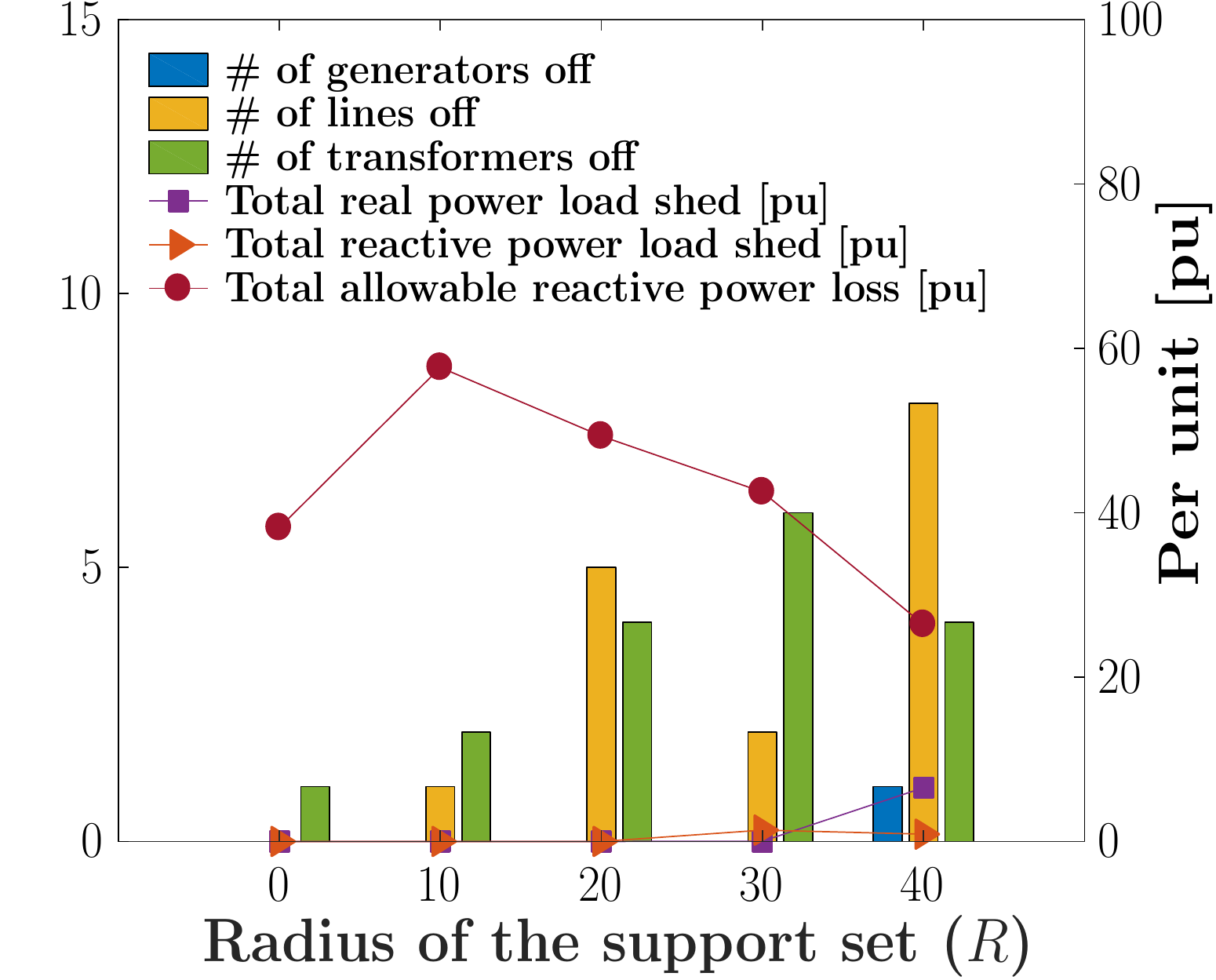} 
  \caption{Optimal objectives and solutions for various $R$ values.} 
  \label{fig:solutions_epri21}
\end{figure}
In this section, we consider how the solutions change when the mean and magnitude of the GMD are varied.
Here, the mean values are $(\mu_{\text{\tiny E}}, \mu_{\text{\tiny N}})=(5,4)$ and $R \in \{0, 10, 20, 30, 40\}$. $R=0$ indicates a deterministic model with $(\tilde{\xi}_{\text{\tiny E}}, \tilde{\xi}_{\text{\tiny N}})=(\mu_{\text{\tiny E}}, \mu_{\text{\tiny N}})=(5,4)$.

In Figure \ref{fig:solutions_epri21}, the optimal objective values (top) and mitigation actions (bottom) change as the size of the uncertainty set ($R$) increases. 
As expected, as $R$ increases, more generators, lines, and transformers are turned off to mitigate the effect of GMDs and address a larger number of worst-case scenarios. The negative impacts of GMDs can be mitigated without shedding loads (only using line and transformer switching) for $R\le 20$. 
However, when $R \geq 10$, the total allowable reactive power losses (\eqref{eq:stg1}'s solution, bottom figure) decreases, indicating that the topological control actions can mitigate these losses, thus potentially reducing the need for expensive blocking devices used in the network. When $R\geq 30$, the topological control actions are not sufficient to handle the uncertainty in the GMD and some real and reactive power loads are shed. This is reflected in the increase in the total cost of the objective value at $R=40$.

\section{Conclusions}
In this paper, we developed a novel two-stage DRO model which uses control of transmission lines, generators, and transformers to mitigate potential negative impacts of uncertain GMDs.
This model minimizes the expected total cost of mitigation for the worst-case distribution in a convex support set of the GMD's uncertainty and subject to convex relaxations of the AC power flow and GMD constraints.
Given this convex support set and mean values for the uncertain GMDs, our DRO model is solvable using the CCG algorithm. However, there are no guarantees of finite time convergence. Instead, we approximated the support set with a polytope with $N$ extreme points that allows the CCG to terminate with a finite number of iterations ($O(N)$). We further reformulated the two-stage DRO model into a monolithic MISOCP for the special case when the support set contains three extreme points. We numerically showed the run-time efficacy of this reformulation. Finally, we provided a detailed case study on epri-21 system which analyzed the effects of modeling uncertain GMDs.

There are a number of interesting future directions for this work. \textit{First}, given the tightness of the triangle support set approximation and the computational efficacy of the exact reformulation, this approach could be used to warm-start the CCG algorithm and speed up the convergence in cases with $N$ extreme points. \textit{Second}, the approximation could be tightened further by considering different choices of the extreme points for the triangle. \textit{Third}, the corrective actions obtained in this paper may not necessarily be feasible to the original nonconvex AC power flow and GMD constraints. Thus AC feasible solution recovery will be important from the practical perspective, albeit this may be non-trivial for the DRO version of the problem. \textit{Finally}, it will be important to scale the DRO to cases with 100's or even 1000's of nodes. \\

\noindent
\textbf{Acknowledgements} This  work  was supported by the U.S. Department of Energy LDRD program at Los Alamos National Laboratory under \emph{``Impacts of Extreme Space Weather Events on Power Grid Infrastructure: Physics-Based Modelling of Geomagnetically-Induced Currents (GICs) During Carrington-Class Geomagnetic Storms"}.

\bibliographystyle{IEEEtran}
\bibliography{references.bib}

\end{document}